\newcounter{theorem}
\newtheorem{thm}[theorem]{Theorem}
\newtheorem{lemma}[theorem]{Lemma}
\newtheorem{prop}[theorem]{Proposition}
\newtheorem{cor}[theorem]{Corollary}
\newtheorem{defn}[theorem]{Definition}
\theoremstyle{remark}
\newtheorem*{remark*}{Remark}
\newtheorem{remark}[theorem]{Remark}
\newtheorem{example}[theorem]{Example}
\numberwithin{equation}{section}
\numberwithin{theorem}{section}
\DeclareMathOperator\spa{span}
\DeclareMathOperator\adj{adj}
\DeclareMathOperator\Tr{Tr}
\newcommand{\defemph}{\emph}
\newcommand{\R}{\mathbb{R}}
\newcommand{\Q}{\mathbb{Q}}
\newcommand{\Z}{\mathbb{Z}}
\newcommand{\N}{\mathbb{N}}
\newcommand{\twotwo}[4]{\left( \begin{array}{cc} #1 & #2 \\ #3 & #4 \end{array}\right)}
\title[$p$-adic functionals]{$p$-adic functionals on torsion-free abelian groups}
\author{Gregory R. Maloney}
\address{Newcastle University}
\subjclass[2010]{Primary: 20K15, 
20K45 
Secondary: 20K20, 
20E18 
}
\keywords{Torsion-free abelian group, $p$-adic functional, pro-$p$ completion, classification}
\date{\today}
\begin{document}

\begin{abstract}
Let $p$ be a prime.  
A $p$-adic functional on a torsion-free abelian group $G$ is a group homomorphism from $G$ to the $p$-adic integers.  
The group of all such $p$-adic functionals is viewed as a $p$-adic dual group of $G$, and is studied from the point of view of functional analysis.  
An analogue of the Hahn--Banach Theorem is proved; this result shows that there are sufficiently many $p$-adic functionals to be interesting.  

There is a natural homomorphism from $G$ to its $p$-adic double dual, and one main result that is proved is that the image of $G$ in this double dual is dense in an appropriate topology.  
This is used to prove the second main result, which says that the $p$-adic double dual of $G$ is the same as the pro-$p$ completion of $G$.  

The theory of $p$-adic functionals can then be used to produce a matrix description of $G$ if $G$ has finite rank.  
This matrix description is related to other matrix descriptions due to Kurosch and Malcev.  
\end{abstract}

\maketitle

\begin{quotation}
\textit{Es w\"{a}re sehr n\"{u}tzlich, noch einige tieferliegende Verbindungen zwischen $p$-adischen Zahlen und primitiven Gruppen zu finden---sie existieren wahrscheinlich---um diese Theorie mehr begrifflich zu machen und insbesondere schwierige Berechnungen} [\ldots] \textit{zu eliminieren.  
}
\end{quotation}
{\hfill ---Alexander Kurosch \cite{K:matrices}}

\section{Introduction}\label{SEC:intro}

Let $p$ be a prime and let $G$ be a torsion-free abelian group.  
The subject of this work is the group $Hom (G,\Z_p)$ of $p$-adic functionals on $G$.  
This group is viewed as a $p$-adic dual of $G$, and is studied using the techniques of functional analysis.  
This is a novel approach that adds clarity to a difficult area of study.  
To emphasize this approach, the notation $G^{*p}$ is used instead of $Hom (G,\Z_p)$.  
It is shown that an analogue of the Hahn--Banach Theorem (Proposition \ref{PROP:all-states-extend}) holds in this setting; broadly speaking, this means that $G^{*p}$ contains enough functionals to be interesting.  

There is a natural homomorphism of $G$ into its $p$-adic double dual, $G^{*p*p}$, and one of the main results of this work is that the image of $G$ is dense in the relative topology that the double dual inherits as a subspace of $\Z_p^{G^{*p}}$ (Theorem \ref{THM:density}).  
As a submodule of the free $\Z_p$-module $\Z_p^{G^{*p}}$, $G^{*p*p}$ is also a free $\Z_p$-module.  
Thus, in particular, it is a pro-$p$ group, and the pro-$p$ topology agrees with the relative topology.  

There is another pro-$p$ group, called the pro-$p$ completion of $G$, that shares this same property of having a homomorphism from $G$ into it, the image of which is dense.  
Moreover, the pro-$p$ completion satisfies a universal property, which implies that there exists a continuous homomorphism from the pro-$p$ completion to the $p$-adic double dual.  
The main result of Section \ref{SEC:completions} is Theorem \ref{THM:pro-p}, which states that this homomorphism has a continuous inverse, so the pro-$p$ completion and the $p$-adic double dual of $G$ are continuously isomorphic (in a way that preserves the image of $G$ in each of them).  

The $\Z_p$-modules $G^{*p}$, as $p$ ranges over the set of all primes, give enough information to reconstruct the isomorphism class of the torsion-free abelian group $G$.  
Section \ref{SEC:classification} describes how this is done in the finite-rank case; in particular, if $G$ has finite rank, then Theorem \ref{THM:classification-3} provides an isomorphism invariant for $G$.  
(For a torsion-free abelian group $G$, let us use the term \defemph{rank} to refer to the torsion-free rank of $G$, that is, the size of any maximal independent subset.)  
This invariant is a matrix description of $G$ that is related to others due to Kurosch and Malcev from 1937 and 1938 respectively (see \cite{K:matrices} and \cite{M:matrices}, or \cite{K:summary} for a more modern description of the latter, with applications to developments since then).  

\subsection{History}\label{SUBSEC:history}

This work contributes to the classification and description of torsion-free abelian groups, which is an area with a long history---an excellent overview of the theory is given in \cite{A:book}.  
In 1937, Baer \cite{B:rank-one} classified the torsion-free abelian groups of rank one using an invariant called the \defemph{type} of the group, which is an equivalence class of supernatural numbers.  

At the same time, Kurosch \cite{K:matrices} and Malcev \cite{M:matrices} developed matrix descriptions of a finite-rank torsion-free abelian group.  
Definitions \ref{DEF:group-determined} and \ref{DEF:factored} and Theorem \ref{THM:classification-3} from Section \ref{SEC:classification} provide a matrix description that is related to these.  

The results here are most closely related to work of Warfield from 1968 \cite{W:homomorphisms}, in which, for a rank-$n$ group $M$ and a rank-$1$ group $G$, Warfield considers the dual $Hom (M,G)$ and the double dual $Hom($ $Hom ($ $M,G),G)$, and gives necessary and sufficient conditions for the natural homomorphism from $M$ to the double dual to be an isomorphism.  

The functional analysis approach that is used here is modelled after the theory of functional analysis on partially ordered torsion-free abelian groups, an excellent reference for which is the book \cite{G:book}.  
In particular, the $p$-adic Hahn--Banach Theorem that is proved here is modelled after Lemma 4.1 and Proposition 4.2 of \cite{G:book} (originally from \cite{GH:states}).  

The matrix description of finite-rank torsion-free abelian groups that appears in Section \ref{SEC:classification} is one of the main results of this work (although by no means is it the only one).  
Given that it is closely related to the matrix descriptions of Kurosch and Malcev, it would be natural to ask: What is new in this work that is not already present in the works of those authors?  
In response to this question, I would point to the epigraph that precedes this introduction.  
This epigraph is a quotation from Kurosch, in which he acknowledges that his matrices arise from a difficult sequence of calculations, and calls for a better theoretical understanding of the relationship between torsion-free abelian groups and $p$-adic integers.  
The theory of $p$-adic functionals that is introduced here subsumes most of the difficult calculations, making them invisible, and so represents a clear answer to Kurosch's entreaty.  
There is the added bonus that the matrices in the present work are simpler than Kurosch's or Malcev's in that they have fewer rows.  

Furthermore Theorem \ref{THM:pro-p}, about the pro-$p$ completion of a torsion-free abelian group, is another consequence of this theory, and to my knowledge this result is entirely new.  

\subsection{Notation}\label{SUBSEC:notation}

Let $p$ be a prime.  
$\Z_p$ denotes the ring of $p$-adic integers and $\Q_p$ denotes its field of fractions.  
The $p$-adic absolute value of an element $\alpha$ of $\Q_p$ or $\Z_p$ is denoted by $|\alpha |_p$.  
If $n\in\N$ and $R$ is a ring, then $R^n$ denotes the module consisting of $n\times 1$ columns with entries in $R$, while $(R^n)^*$ denotes its dual consisting of $1\times n$ rows.  
If $m,n\in\N$, then $M_{m\times n}(R)$ denotes the module of $m\times n$ matrices over $R$.  
For $r\in R^n$, $r^t$ denotes its transpose in $(R^n)^*$.  
For $A\in M_{m\times n}(R)$, $A^t$ denotes its transpose in $M_{n\times m}(R)$.  
For $v = (v_1,\ldots, v_n)^t \in \Z_p^n$, the norm $\| v\|_p $ is defined by $\| v\|_p := \max_{1\leq i\leq n} \{ \| v_i\|_p\}$.  
Let $G$ be a torsion-free abelian group and $S$ a subset of $G$.  
Then $\langle S\rangle_*$ denotes the pure subgroup of $G$ generated by $S$; that is, $\langle S\rangle_* := \{ g\in G : ng \in \langle S\rangle$ for some $n\in\N\}$.  

\subsection{Outline}\label{SUBSEC:outline}

Section \ref{SEC:examples} contains examples of torsion-free abelian groups with calculations of their $p$-adic functionals.  

Section \ref{SEC:states} is the biggest section, and introduces the notation and terminology of $p$-adic functionals.  
It includes a discussion of the $p$-adic topology on a torsion-free abelian group $G$ (Section \ref{SUBSEC:pseudometric}); the statement and proof of an analogue of the Hahn--Banach Theorem (Proposition \ref{PROP:all-states-extend} in Section \ref{SUBSEC:functionals}); results about the dual space of all $p$-adic functionals on $G$ (Section \ref{SUBSEC:state-spaces}); and results about double dual spaces, including the fact that the natural homomorphism from $G$ to its double dual has dense image (Theorem \ref{THM:density} in Section \ref{SUBSEC:double-duals}).  

Section \ref{SEC:completions} contains a proof that the $p$-adic double dual of a torsion-free abelian group $G$ can be naturally identified with the pro-$p$ completion of $G$ (Theorem \ref{THM:pro-p}).  

Section \ref{SEC:classification} uses the theory of $p$-adic functionals to classify finite-rank torsion-free abelian groups up to isomorphism; this is the content of Theorem \ref{THM:classification-3}.  
The classifying invariant is a sequence of matrices that is related to Malcev's matrices, and is subject to the same equivalence relation as Malcev's matrices \cite{M:matrices}, although these matrices have fewer rows.  

\section{Examples}\label{SEC:examples}

This section contains two examples to illustrate the theory.  
The groups in both of these examples are inductive limits of stationary inductive sequences of finitely-generated free abelian groups---see \cite{D:stationary} for previous results about these groups, including a classification up to quasi-isomorphism.  
There has already been work done on the real-valued functionals on \emph{ordered} abelian groups that are inductive limits of stationary inductive sequences of simplicial groups (see \cite{H:irrational} and \cite{M:stationary}).  
The $p$-adic theory is similar to the theory in the ordered case: in particular, $p$-adic functionals come from rows that are combinations of certain left eigenvectors of a matrix representation of the homomorphism from the stationary inductive sequence \cite[Theorem 4.9]{M:finite-rank}.  

\begin{example}\label{EX:favourite}
Consider the matrix $A = \twotwo{1}{1}{1}{4}$.  
The characteristic polynomial of $A$ is $\chi_A(x) = x^2-5x+3$, which has roots $x = \frac{5\pm\sqrt{13}}{2}$.  
These roots, being algebraic integers, have $3$-adic absolute value at most $1$, and, as their product is $\det A = 3$ and their sum is $\Tr A = 5$, exactly one of them must have $3$-adic absolute value less than $1$.  
Let us use the symbol $\lambda$ to denote this eigenvalue.  
Then eigenvectors of $A$ associated to $\lambda$ and its algebraic conjugate, $5-\lambda$, can be taken to be $v_1 := {{\lambda - 4}\choose{1}}$ and $v_2 := {{1-\lambda}\choose{1}}$.  

Consider the torsion-free abelian group $G = \bigcup_{n=0}^\infty A^{-n}(\Z^2)$.  
$\Z$ is dense in $\Z_3$, so, as $\Z^2\subseteq G$, $G$ contains elements that are arbitrarily close to $v_1$ and $v_2$ in the norm $\| \cdot \|_3$.  
In particular, the sequence 
\[
{{-1}\choose{1}}, {{-7}\choose{1}}, {{-7}\choose{1}}, {{-34}\choose{1}}, {{-115}\choose{1}}, {{128}\choose{1}}, \ldots 
\]
has the property that its $n$th element is divisible by $3^n$ in $G$, and this sequence converges to the $\lambda$-eigenvector ${{\lambda - 4}\choose{1}}$ of $A$.  

There is a metric on $G$ called the $3$-adic metric (Definition \ref{DEF:pseudometric}) that says that an element is small if it is divisible in $G$ by high powers of $3$.  
Write $g\in G$ as $g = \alpha_1v_1+\alpha_2v_2$; then it can be proved that $g$ is divisible by high powers of $3$ if and only if $|\alpha_2|_3$ is small.  
(Here is the idea of the proof:  $g\in 3^nG$ if and only if $G^kg \in 3^n\Z^2$ for some $k\geq 0$.  
But $G^kg = \lambda^k\alpha_1v_1 + (5-\lambda)^k\alpha_2v_2$, and $|\lambda^k|_3 = 3^{-k}$, which is already small in $p$-adic absolute value, so $G^kg$ will be small in the $3$-adic sense if and only if $|(5-\lambda)^k\alpha_2|_3 = |\alpha_2|_3$ is small.)  

Any group homomorphism $f : G\to \Z_3$ can be represented as left multiplication by a row in $(\Z_p^2)^*$, and any such row has the form $\beta_1v_1^t + \beta_2v_2^t$, where $\beta_1,\beta_2\in \Z_3$.  
As $A$ is symmetric, $v_1^t$ and $v_2^t$ are left eigenvectors associated to $\lambda$ and $5-\lambda$ respectively.  
The fact that $f: G\to \Z_3$ is a group homomorphism implies that it must be continuous with respect to the $3$-adic metric on $G$, which happens if and only if $\beta_1 = 0$ (this is a consequence of the fact that $v_1^tv_2 = 0$, so the $v_1^t$ component of $f$ kills the component of $g$ that determines how $3$-divisible $g$ is).  

Thus, up to multiplication by a scalar, there is a unique group homomorphism $f : G\to \Z_3$, which is given by left multiplication by $v_2^t$.  
The image under $f$ of $G$ is 
\begin{align*}
(1-\lambda, 1) \bigcup_{n=0}^\infty A^{-n}(\Z^2) & = (5-\lambda)^{-n}(1-\lambda,1)\Z^2 \\
& = \{ (5-\lambda)^{-n} [ (1-\lambda)a+b] : n\geq 0, a,b\in\Z\} \\
& = \Big\{ \Big( \frac{\lambda}{3}\Big)^n (c\lambda+d) : n\geq 0, c,d\in\Z\Big\}.
\end{align*}

$f$ is an isometry, so $G$ is in fact isomorphic to this image, which is a subgroup of $\Z_3$ because $\lambda/3\in\Z_3$.  

\end{example}

The group in the next example admits two independent $3$-adic functionals.  
\begin{example}\label{EX:rank-two-dual}
Consider the matrix $B = \left( \begin{array}{rrr} 0 & 0 & -3 \\ 1 & 0 & -1 \\ 0 & 1 & -1 \end{array}\right)$.  
The characteristic polynomial of $B$ is $\chi_B(x) = x^3+x^2+x+3$; let us denote the roots of this by $\lambda, \mu,$ and $\nu$.  
The product of these roots is $\det B = -3$, so at least one of them---say $\lambda$---has $3$-adic absolute value less than $1$.  
But $\lambda\mu + \mu\nu + \nu\lambda = 1$, so $|\mu|_3= |\nu|_3 = 1$.  
It is a consequence of the Newton Polygon Theorem \cite[Theorem 6.3.1]{C:local-fields} that $\lambda\in \Q_3$, and hence $\lambda\in \Z_3$, although the Theorem does not say anything $\mu$ and $\nu$.  
$w_\lambda := (1,\lambda, \lambda^2)$, $w_\mu := (1,\mu,\mu^2)$, and $w_\nu := (1,\nu,\nu^2)$ are left eigenvectors of $B$ associated to the eigenvalues $\lambda, \mu,$ and $\nu$, respectively.  
Let $v_\lambda,v_\mu,$ and $v_\nu$ denote corresponding right eigenvectors.  

Consider the torsion-free abelian group $H = \bigcup_{n=0}^\infty B^{-n}(\Z^3)$.  
Any element $g\in H$ can be written as a combination $g = \alpha_\lambda v_\lambda + \alpha_\mu v_\mu + \alpha_\nu v_\nu$, but in this case the coefficients will lie in the splitting field $K$ of $\chi_A(x)$ over $\Q_3$.  
As was the case in Example \ref{EX:favourite}, $g$ is divisible by high powers of $3$ if and only if $\alpha_\mu$ and $\alpha_\nu$ are small in the extension of the $3$-adic absolute value to $K$.  
Thus the only group homomorphisms from $H$ into $\Z_3$ are the combinations of $w_\mu$ and $w_\nu$.  

Using the facts that $\lambda\mu\nu = -3$, $\lambda\mu + \mu\nu + \nu\lambda = 1$, and $\lambda + \mu + \nu = -1$, we can calculate 
\begin{align*}
w_\mu + w_\nu & = (2,-1-\lambda,-1-\lambda^2)\in (\Z_3^3)^* \\
\text{and } \nu w_\mu + \mu w_\nu & = (-1-\lambda,2+2\lambda+2\lambda^2,2-\lambda-\lambda^2) \in (\Z_3^3)^*.  
\end{align*}
Any homomorphism $f: H \to \Z_3$ can be written as a $\Z_3$-combination of these two rows.  
(This follows from a rank argument:  the group $Hom (H,\Z_3)$ is a free $\Z_3$-module, its rank cannot exceed $3$, it does not contain any non-zero multiple of $w_\lambda$, and we have found two independent rows that it does contain.)  
\end{example}

\section{$p$-adic functionals}\label{SEC:states}  

The goal of this section is to study the group $Hom (G,\Z_p)$, of homomorphisms from a torsion-free abelian group $G$ into $\Z_p$, from a functional analysis point of view.  
To emphasize this point of view, these homomorphisms will be referred to as \defemph{$p$-adic functionals} on $G$.  
\begin{defn}\label{DEF:p-adic-functional}
Let $p$ be a prime and let $G$ be a torsion-free abelian group.  
Then a \defemph{$p$-adic functional} on $G$ is a group homomorphism $f : G\to \Z_p$.  
\end{defn}

The starting point of this work is Proposition \ref{PROP:all-states-extend}, which is an analogue of the Hahn--Banach Theorem for the setting of torsion-free abelian groups.  
Proposition \ref{PROP:all-states-extend} says that if $f$ is a $\Z_p$-valued group homomorphism on a subgroup $H$ of $G$ and $f$ is \defemph{$t$-contractive} for some \defemph{$p$-adic sublinear pseudometric} $t$ (Definition \ref{DEF:t-contractive}), then $f$ extends to a $p$-adic functional on $G$ that is also $t$-contractive.  
This allows us to establish the existence of $p$-adic functionals on $G$ with desired properties by first finding functionals with those properties defined on some subgroup and then extending them to all of $G$.  

A consequence of this theory is that the group of all $\Z_p$-valued homomorphisms on $G$ (which will be denoted by $G^{*p}$ to emphasize the functional analysis point of view) contains enough functionals to be interesting.  
Section \ref{SUBSEC:state-spaces} deals with this group, both as a topological space and as a torsion-free abelian group.  

The elements of $G$ are themselves $p$-adic functionals on $G^{*p}$, and there is a homomorphism from $G$ into the double dual group, which consists of $p$-adic functionals on the group of $p$-adic functionals on $G$.  
It will be shown in Section \ref{SEC:completions} that this double dual group $G^{*p*p}$ coincides exactly with the pro-$p$ completion of $G$.  
This result provides a means of describing the pro-$p$ completion of $G$, which is interesting in itself.  

$p$-adic functionals are a $\Z_p$-submodule of the set of $\Q_p$-valued group homomorphisms on $G$ that are contractive with respect to a certain pseudometric; let us describe this pseudometric now.  

\subsection{The $p$-adic pseudometric}\label{SUBSEC:pseudometric}

If $g$ is an element of a torsion-free abelian group $G$ and $n\in\N$, let us say that $g$ is \defemph{divisible by $n$} or \defemph{$n$-divisible in $G$} if there exists $h\in G$ such that $nh = g$.  
Let us say that $G$ is $n$-divisible if each element of $G$ is $n$-divisible.  

\begin{defn}\label{DEF:pseudometric}
Let $G$ be a torsion-free abelian group and $p$ be a prime.  
Define the \defemph{$p$-adic pseudometric} $d_p$ on $G$ by 
\[
d_p(g,h) = p^{-k},
\]
where $k$ is the non-negative integer that is maximal with the property that $g-h$ is divisible by $p^k$.  
If $g-h$ is divisible by unbounded powers of $p$, then $d_p(g,h) = 0$.  
\end{defn}

The reader is invited to verify that $d_p$ satisfies the definition of a pseudometric; indeed, it satisfies the strong triangle inequality: $d_p(g,h) \leq \max \{ d_p(g,z),d_p(z,h)\}$, where $g,h,$ and $z\in G$.  
$d_p$ also satisfies an absolute scalability condition: if $g,h\in G$ and $n\in\Z$, then $d_p(ng,nh) = |n|_pd_p(g,h)$.  

$d_p$ is a metric if and only if $0 \in G$ is the only element that is divisible by arbitrarily high powers of $p$; in this case, let us refer to it as the \defemph{$p$-adic metric}.  
$d_p$ is bounded: $d_p(g,h)\leq 1$ for all $g,h\in G$.  

The topology arising from the $p$-adic pseudometric is called the $p$-adic topology in \cite{K:abelian-groups}.  

Let us use the symbol $G_p$ to denote the subgroup of $G$ containing all elements that are divisible by arbitrarily high powers of $p$.  
It is not difficult to verify that $G_p$ is a pure subgroup.  
Given a subgroup $H$ of $G$, let $\overline{H}^p$ denote the closure of $H$ in the topology arising from $d_p$.  
Note that $\overline{\{ 0\}}^p = G_p$, so $\overline{H}^p$ contains $G_p$ for all subgroups $H$.  

The following lemma will be useful later, when proving Lemma \ref{LEM:quotient}, which is used to prove Theorem \ref{THM:pro-p}, one of the main theorems.  
It says that, if $H$ is a subgroup of $G$ such that any element of $G/H$ has order dividing $p^m$, then any element $g\in G$ that is not in $H$ must lie at a distance of at least $p^{-m}$ from $H$.  
In particular, this implies that if $[G:H] = p^m$, then $H = \overline{H}^p$.  
\begin{lemma}\label{LEM:subgroup-separation}
Let $p$ be a prime, let $G$ be a torsion-free abelian group with $p$-adic pseudometric $d_p$, let $H$ be a subgroup of $G$, and let $m\in\N$.  
Then the following are equivalent.  
\begin{enumerate}
\item  The order of any element $g+H\in G/H$ divides $p^m$.  
\item  If $g, g'\in G$ and $d_p(g,g') \leq p^{-m}$, then $g+H = g'+H$.  
\end{enumerate}
\end{lemma}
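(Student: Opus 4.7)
My plan is to prove the two implications of the equivalence directly from the definition of $d_p$, without needing any auxiliary machinery. Both directions unpack into one-line arguments once the right elements are plugged into the right definitions, so I do not expect any serious obstacle; the only care needed is to keep straight the translation between ``$g-h$ is divisible by $p^k$ in $G$'' and ``$d_p(g,h)\le p^{-k}$''.

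For the implication $(1)\Rightarrow(2)$, I would assume that every coset $g+H\in G/H$ is killed by $p^m$, and take arbitrary $g,g'\in G$ with $d_p(g,g')\le p^{-m}$. By the definition of $d_p$ this means $g-g'$ is divisible by $p^m$ in $G$, so I can write $g-g'=p^m h$ for some $h\in G$. Applying (1) to the coset $h+H$ gives $p^m h\in H$, hence $g-g'\in H$, which is exactly $g+H=g'+H$.

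For the converse $(2)\Rightarrow(1)$, given any $g\in G$ I want to show $p^m g\in H$. The trick is just to apply the hypothesis (2) to the pair $(p^m g, 0)$: their difference is $p^m g$, which by construction is divisible by $p^m$ in $G$, so $d_p(p^m g,0)\le p^{-m}$. Then (2) forces $p^m g+H=0+H$, i.e.\ $p^m g\in H$. Since $g$ was arbitrary this says the order of every element of $G/H$ divides $p^m$, which is (1). The argument is thus essentially a two-line bookkeeping exercise on both sides, and no deeper structure (torsion-freeness, purity, etc.) needs to be invoked.
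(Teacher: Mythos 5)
Your proof is correct and follows essentially the same route as the paper's: for $(1)\Rightarrow(2)$ you unpack $d_p(g,g')\le p^{-m}$ into $g-g'=p^mh$ and apply (1) to the coset of $h$, and for $(2)\Rightarrow(1)$ you apply (2) to the pair $(p^mg,0)$. The only cosmetic difference is that the paper justifies $d_p(p^mg,0)\le p^{-m}$ via the scalability identity $d_p(p^mg,0)=p^{-m}d_p(g,0)$ rather than directly from divisibility, which amounts to the same thing.
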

\begin{proof}
To prove that (1) implies (2), suppose that $g,g'\in G$ are such that $d_p(g,g') \leq p^{-m}$.  
This means there exists $h\in G$ such that $p^mh = g-g'$.  
But by (1), $p^mh\in H$, so 
\[
g + H = g - g' + g' + H = p^{m}h + g' + H = g' + H.
\]

To see that (2) implies (1), pick $g\in G$.  
Then 
\[
d_p(p^mg,0) = p^{-m}d_p(g,0) \leq p^{-m}, 
\]
so, by (2), $p^mg + H = 0 + H$, that is, $p^mg\in H$.  
\end{proof}

The following lemma establishes that the closure of a pure subgroup is pure.  

\begin{lemma}\label{LEM:pure-closure}
Let $p$ be a prime, let $G$ be a torsion-free abelian group, and let $H$ be a pure subgroup of $G$.  
Then $\overline{H}^p$ is also a pure subgroup.  
\end{lemma}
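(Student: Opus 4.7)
The plan is to show first that $\overline{H}^p$ is a subgroup of $G$ at all, and then to verify the purity condition directly by approximating a pre-image.

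Step 1 (subgroup). The pseudometric $d_p$ is translation-invariant and satisfies the strong triangle inequality, so the group operations of $G$ are continuous. Hence the closure of the subgroup $H$ is a subgroup; this is a standard topological-group argument and I would dispense with it in a single sentence.

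Step 2 (setup for purity). Suppose $g\in G$ and $n\in\N$ satisfy $ng\in\overline{H}^p$. I want to show $g\in\overline{H}^p$. Factor $n=p^k m$ with $\gcd(m,p)=1$; the prime part $p^k$ will be absorbed by purity, and the coprime part $m$ will be dealt with by inverting it modulo a power of $p$. Fix an arbitrary $N\in\N$; my goal is to produce an element of $H$ within $p$-adic distance $p^{-N}$ of $g$.

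Step 3 (extract the $p^k$ using purity). Since $ng\in\overline{H}^p$, I can pick $h\in H$ with $d_p(h,ng)\leq p^{-(N+k)}$. Then $h-ng=p^{N+k}y$ for some $y\in G$, so
\[
h = p^k m g + p^{N+k}y = p^k\bigl(mg + p^N y\bigr).
\]
Since $h\in H$ and $h=p^k(mg+p^N y)$ with $mg+p^N y\in G$, purity of $H$ forces $u:=mg+p^N y\in H$.

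Step 4 (invert $m$ mod $p^N$). Because $\gcd(m,p)=1$, there exists $m'\in\Z$ with $mm'\equiv 1\pmod{p^N}$; write $mm'=1+p^N s$ for some $s\in\Z$. Then $m'u\in H$ and
\[
m'u - g = (mm'-1)g + p^N m' y = p^N\bigl(sg + m'y\bigr),
\]
so $d_p(m'u,g)\leq p^{-N}$. As $N$ was arbitrary, $g\in\overline{H}^p$.

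The only non-routine move is the bookkeeping in Step 3, where one has to choose the approximation tolerance to be $p^{-(N+k)}$ rather than $p^{-N}$, so that after purity strips off the factor $p^k$ one still has $p^N$ left in the error term. Once that trick is in place, the coprime factor $m$ is handled by elementary modular arithmetic and the argument closes.
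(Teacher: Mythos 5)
Your proof is correct, but it takes a genuinely different route from the paper's. The paper passes to the quotient $G/H$, which is torsion-free because $H$ is pure, observes that $ng\in\overline{H}^p$ is equivalent to $ng+H$ lying in $(G/H)_p$ (the subgroup of elements divisible by arbitrarily high powers of $p$), and then invokes the purity of $(G/H)_p$ in $G/H$ --- a fact asserted earlier in Section \ref{SUBSEC:pseudometric} with the verification left to the reader --- to conclude $g+H\in (G/H)_p$ and hence $g\in\overline{H}^p$. You instead work directly in $G$: splitting $n=p^km$ into its $p$-part and prime-to-$p$ part, stripping off $p^k$ via purity of $H$ (with the tolerance correctly inflated to $p^{-(N+k)}$ so that a factor of $p^N$ survives), and disposing of $m$ by inverting it modulo $p^N$. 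In effect you are reproving, inside $G$, precisely the purity of the $p$-divisible subgroup that the paper cites without proof; the payoff is a self-contained and fully explicit argument, at the cost of some bookkeeping. Both arguments are valid, and all your steps check out (including the use of purity to get $u=mg+p^Ny\in H$ from $p^ku\in H$, and the identity $m'u-g=p^N(sg+m'y)$); the only cosmetic remark is that the statement allows $n\in\Z\setminus\{0\}$, so one should note that replacing $g$ by $-g$ reduces to your case $n\in\N$.
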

\begin{proof}
It is easy to verify that $\overline{H}^p$ is a subgroup: if $g,g'\in \overline{H}^p$, then given $\epsilon > 0$ we can find $h,h'\in H$ such that $d_p(g,h), d_p(g',h') < \epsilon$, in which case $d_p(g+g',h+h') \leq \max \{ d_p(g,h),d_p(g',h')\} < \epsilon$.  

To see that $\overline{H}^p$ is pure, suppose $ng\in \overline{H}^p$ for some $n\in \Z\backslash \{ 0\}$ and $g\in G$.  
This is equivalent to saying that $d_p(ng+H,0) = 0$ in the quotient group $G/H$, which is torsion-free as $H$ is pure.  
Thus $ng+H\in (G/H)_p$, which is pure in $G/H$, so $g+H\in (G/H)_p$, so $g\in \overline{H}^p$ as well.  
\end{proof}

Closed pure subgroups will play a role in what follows, so it will be useful to give a name to such subgroups.  
\begin{defn}\label{DEF:ideal-simple}
Let $p$ be a prime and let $G$ be a torsion-free abelian group.  
Let us use the term \defemph{$p$-ideal} to refer to any pure subgroup of $G$ that is closed in the topology arising from $d_p$.  
Let us say that $G$ is \defemph{$p$-simple} if $G_p$ is the only proper $p$-ideal of $G$.  
\end{defn}

The group $G$ in Example \ref{EX:favourite} is $3$-simple: it has many proper non-trivial pure subgroups, each of which is isomorphic to $\Z$, but the closure of any of them in the metric $d_3$ is all of $G$.  

Given a family $(J_\lambda)_{\lambda\in\Lambda}$ of $p$-ideals of $G$, the intersection $\bigcap_{\lambda\in\Lambda}J_\lambda$ is again a $p$-ideal.  
The sum of the $J_\lambda$s is not necessarily pure or closed, but $\overline{\langle \bigcup_{\lambda\in\Lambda}J_\lambda \rangle_*}^p$ is a $p$-ideal, and is clearly the smallest one that contains each $J_\lambda$.  
Thus the set of $p$-ideals of $G$ forms a complete lattice where the partial order is given by inclusion.  
The minimal $p$-ideals---namely, the elements of this lattice that have nothing except $G_p$ below them---are of particular interest.  

\begin{defn}\label{DEF:minimal-ideal}
Let $p$ be a prime and let $G$ be a torsion-free abelian group.  
A $p$-ideal $J$ of $G$ is called \defemph{minimal} if it properly contains $G_p$, but does not properly contain any other $p$-ideal of $G$.  
\end{defn}

Lemma \ref{LEM:singly-generated-ideal}, below, establishes that a singly-generated $p$-ideal is minimal.  
The proof of this fact uses the following lemma, which will also be used later to show that a $p$-ideal can be separated from an element not in its closure using a $p$-adic functional (Corollary \ref{COR:separating-state}).  

\begin{lemma}\label{LEM:closure}
Let $p$ be a prime, let $G$ be a torsion-free abelian group, let $g\in G$, and let $H$ be a pure subgroup of $G$.  
If the distances $\{ d_p(g,h) : h\in H\}$ are bounded below by $\epsilon > 0$, then the values $\{\frac{1}{|n|_p}d_p(ng,h) : h\in H, n\in \Z\backslash \{ 0\}\}$ are also bounded below by $\epsilon$.  
\end{lemma}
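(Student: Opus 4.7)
My plan is to argue by contradiction: suppose there exist $n\in\Z\setminus\{0\}$ and $h\in H$ with $\frac{1}{|n|_p}d_p(ng,h)<\epsilon$, and produce some $h^*\in H$ with $d_p(g,h^*)<\epsilon$. The strategy is to absorb the $p$-part of $n$ into purity of $H$, and the prime-to-$p$ part via Bezout (since any integer coprime to $p$ is invertible modulo large powers of $p$).

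Concretely, factor $n=p^k m$ with $\gcd(m,p)=1$, so $|n|_p=p^{-k}$. The supposition means $d_p(ng,h)<p^{-k}\epsilon$, so there is an integer $j>k$ and an element $x\in G$ with $p^jx=ng-h$ and $p^{-(j-k)}<\epsilon$. Rewriting, $p^k\bigl(mg-p^{j-k}x\bigr)=h$. Because $H$ is a pure subgroup, this forces $h':=mg-p^{j-k}x\in H$; thus $h'\in H$ is congruent to $mg$ modulo $p^{j-k}G$. At this point the $p^k$ has been neutralised, and the obstacle becomes undoing the factor $m$.

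Since $\gcd(m,p)=1$, also $\gcd(m,p^{j-k})=1$, so Bezout supplies integers $a,b\in\Z$ with $am+bp^{j-k}=1$. Multiplying $mg-h'=p^{j-k}x$ by $a$ and using $am=1-bp^{j-k}$ gives
\[
g = ah' + p^{j-k}(ax+bg).
\]
Hence $h^*:=ah'\in H$ satisfies $d_p(g,h^*)\leq p^{-(j-k)}<\epsilon$, contradicting the hypothesis that $\{d_p(g,h):h\in H\}$ is bounded below by $\epsilon$.

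The only real obstacle is the second move: once we have trapped $mg$ near a genuine element of $H$, we need to extract $g$ itself. Purity alone is not enough there, but it does not have to be—the coprimality of $m$ and $p$ makes $m$ invertible modulo every relevant power of $p$, which is exactly what Bezout delivers. Verifying that the constants $j,k$ line up (i.e.\ that $p^{-(j-k)}<\epsilon$ follows from $p^{-j}<p^{-k}\epsilon$) is routine.
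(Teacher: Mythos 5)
Your argument is correct. The paper also proceeds by contradiction, but organizes the reduction differently: it takes a \emph{minimal} positive $n$ violating the conclusion, uses purity of $H$ to show such a minimal $n$ cannot be divisible by $p$ (writing $h=ph'$ with $h'\in H$ and contradicting minimality), and then passes to the quotient $G/H$ --- torsion-free because $H$ is pure --- where multiplication by an integer coprime to $p$ preserves the pseudometric, concluding $d_p(g+H,0)<\epsilon$ and pulling an approximating element of $H$ back to $G$. You instead factor $n=p^km$ once and for all, strip the $p$-part by the same purity mechanism applied directly (no induction or well-ordering), and replace the quotient-group step by the explicit Bezout identity $am+bp^{j-k}=1$, which hands you the approximating element $ah'\in H$ concretely. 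The two proofs rest on the same two facts --- purity lets you divide elements of $H$ by $p$ inside $H$, and integers prime to $p$ are invertible modulo $p^{j-k}$ --- but yours avoids both the minimality argument and the quotient construction, at the cost of a slightly longer computation, and has the merit of exhibiting the nearby element of $H$ explicitly. One point worth making explicit in your write-up: the purity step needs $j>k$, which follows from $p^{-(j-k)}<\epsilon\leq 1$, and the bound $\epsilon\leq 1$ holds because $0\in H$ and $d_p(g,0)\leq 1$.
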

\begin{proof}
Suppose otherwise, and let $n\in \N$ be minimal with the property that $\frac{1}{|n|_p}d_p(ng,h) < \epsilon$ for some $h\in H$.  
Let us show first that $n$ is not divisible by $p$.  

Suppose for a contradiction that $p$ that divides $n$---say $n = pn'$ for $n'\in \N$.  
The assumption that $d_p(ng,h) < \epsilon|n|_p < 1$ implies that $ng-h$ is $p$-divisible.  
Combined with the obvious fact that $ng$ is $p$-divisible, this implies that $h$ is $p$-divisible---say $h = ph'$ for some $h'\in H$.  
But then $\frac{1}{|n'|_p}d_p(n'g,h') = \frac{1}{|n|_p}d_p(ng,h) < \epsilon$, contradicting the minimality of $n$.  
Thus $n$ is not divisible by $p$.  

Consider the element $g+H\in G/H$.  
As $H$ is pure, $G/H$ is torsion-free.  
$d_p(ng,h) < \epsilon$ in $G$, so $d_p(ng + H, 0) < \epsilon$ in $G/H$.  
But $p$ does not divide $n$, so $d_p(g+H,0) = |n|_p d_p(g+H,0) = $ $d_p(ng+H,0) < \epsilon$.  
But the statement that $d_p(g+H,0)< \epsilon$ in $G/H$ implies that $d_p(g,h) < \epsilon$ in $G$ for some $h\in H$, which is a contradiction.  
\end{proof}

\begin{lemma}\label{LEM:singly-generated-ideal}
Let $p$ be a prime, let $G$ be a torsion-free abelian group, and suppose that $g\in G\backslash G_p$.  
Then $\overline{\langle g\rangle_*}^p$ is a minimal $p$-ideal.  
\end{lemma}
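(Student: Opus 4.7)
My plan is first to dispatch the two easy properties and then focus the main work on minimality. Purity of $\overline{\langle g\rangle_*}^p$ is immediate from Lemma \ref{LEM:pure-closure}, and it is closed by construction, so it is a $p$-ideal; it properly contains $G_p$ because $g$ lies in it but, by hypothesis, not in $G_p$.

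For minimality, I would take an arbitrary $p$-ideal $J$ with $G_p \subsetneq J \subseteq \overline{\langle g\rangle_*}^p$ and show that $g\in J$; since $J$ is pure and closed this will force $\overline{\langle g\rangle_*}^p\subseteq J$. Pick any $h\in J\setminus G_p$, so $d_p(h,0) = p^{-j_0}$ for some finite $j_0$. Because $h\in \overline{\langle g\rangle_*}^p$, choose a sequence $h_k\in \langle g\rangle_*\setminus\{0\}$ with $d_p(h,h_k)\to 0$; for each $k$ the definition of $\langle g\rangle_*$ gives integers $a_k,b_k$, both nonzero, with $a_kh_k = b_kg$. For all $k$ large enough that $d_p(h,h_k) < p^{-j_0}$, the strong triangle inequality forces $d_p(h_k,0) = p^{-j_0}$, and applying the absolute scalability of $d_p$ to the relation $a_kh_k=b_kg$ yields
\[
|a_k|_p\, p^{-j_0} \;=\; |a_k|_p\, d_p(h_k,0) \;=\; |b_k|_p\, d_p(g,0),
\]
so $|b_k|_p/|a_k|_p$ is a positive constant $c$ independent of (large) $k$.

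The crux is then a proof by contradiction using Lemma \ref{LEM:closure}. Suppose $g\notin J$; since $J$ is closed, $\epsilon := \inf_{h'\in J} d_p(g,h') > 0$, and Lemma \ref{LEM:closure} applied to the pure subgroup $J$ gives $\frac{1}{|n|_p} d_p(ng,h') \geq \epsilon$ for every nonzero $n\in\Z$ and every $h'\in J$. Taking $n=b_k$ and $h' = a_kh \in J$ and using $a_kh_k = b_kg$,
\[
|a_k|_p\, d_p(h_k,h) \;=\; d_p(a_kh_k, a_kh) \;=\; d_p(b_kg, a_kh) \;\geq\; \epsilon\, |b_k|_p,
\]
so $d_p(h_k,h) \geq \epsilon\, |b_k|_p/|a_k|_p = \epsilon c > 0$ for all large $k$, contradicting $d_p(h_k,h)\to 0$. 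Hence $g\in J$, and $J = \overline{\langle g\rangle_*}^p$.

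The step I expect to be the main obstacle is the bookkeeping around the ratio $|b_k|_p/|a_k|_p$: one has to notice that, precisely because $h\notin G_p$, the $p$-adic sizes of $h_k$ stabilize at $p^{-j_0}$, which pins down this ratio via the relation $a_kh_k=b_kg$ and prevents it from collapsing to zero. Once that observation is in hand, Lemma \ref{LEM:closure} converts the hypothetical gap between $g$ and $J$ into a fixed positive lower bound on $d_p(h_k,h)$, producing the contradiction cleanly.
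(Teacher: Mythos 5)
Your proof is correct and follows essentially the same route as the paper's: both hinge on Lemma \ref{LEM:closure} together with the integer relation $a_kh_k=b_kg$ supplied by the approximants of $h$ in $\langle g\rangle_*$. The only difference is organizational --- the paper separates $g$ from $\overline{\langle h\rangle_*}^p$ and concludes $d_p(h,0)=0$ directly (using coprimality of the integers to force $|m_k|_p=1$), while you separate $g$ from $J$ and contradict the convergence $h_k\to h$ via the pinned-down ratio $|b_k|_p/|a_k|_p$ --- but the underlying estimate is the same.
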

\begin{proof}
$\overline{\langle g\rangle_*}^p$ is certainly a $p$-ideal; to see that it is minimal, let us suppose that it properly contains another $p$-ideal $H$ and show that $H = G_p$.  
Choose $h\in H$ and consider the $p$-ideal $\overline{\langle h\rangle_*}^p$.  
By assumption, $g$ is not in this $p$-ideal, so there exists $\epsilon > 0$ such that $d_p(g,h')>\epsilon$ for all $h'\in \langle h\rangle_*$.  
By Lemma \ref{LEM:closure}, the values $\{ \frac{1}{|n|_p}d_p(ng,h') : n\in \Z\backslash \{ 0\}, h'\in \langle h\rangle_*\}$ are also bounded below by $\epsilon$.  

But the hypothesis that $h\in \overline{\langle g\rangle_*}^p$ means that, for all $k\in \N$, there exists $g_k\in \langle g\rangle_*$ such that $d_p(g_k,h)<p^{-k}$.  
There exist relatively prime $n_k\in\Z$ and  $m_k\in\Z\backslash \{ 0\}$ such that $m_kg_k = n_kg$.  
Thus $d_p(n_kg,m_kh) < p^{-k}$.  
Combined with the fact that $\frac{1}{|n_k|_p}d_p(n_kg,m_kh) > \epsilon$, this implies that $\frac{1}{|n_k|_p}p^{-k}> \epsilon$, that is, $|n_k|_p < p^{-k}/\epsilon$.  
Thus, given $\delta > 0$, the value $|n_k|_p$ can be made less than $\delta$ by choosing large enough $k$.  
So in particular, for large enough $k$, $p\mid n_k$, and hence $p\nmid m_k$.  
But then 
\begin{align*}
d_p(h,0) & = |m_k|_pd_p(h,0) \\
 & = d_p(m_kh,0) \\
 & \leq \max \{ d_p(n_kg,m_kh),d_p(n_kg,0)\} \\
 & \leq \max \{ p^{-k},\delta\}.  
\end{align*}
Thus $d_p(h,0) = 0$, so $h\in G_p$.  
\end{proof}
An immediate consequence of Lemma \ref{LEM:singly-generated-ideal} is the following.  
\begin{cor}\label{COR:minimal-ideals}
Let $p$ be a prime, let $G$ be a torsion-free abelian group, and let $H$ be a $p$-ideal of $G$.  
Then $H$ is minimal if and only if it is singly generated as a $p$-ideal and not equal to $G_p$.  
\end{cor}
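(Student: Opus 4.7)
The plan is to show both directions using the preceding lemma. For the ``if'' direction, suppose $H$ is singly generated as a $p$-ideal, say $H = \overline{\langle g\rangle_*}^p$, and $H \neq G_p$. I would first observe that $g \notin G_p$: indeed, if $g \in G_p$, then since $G_p$ is pure, $\langle g\rangle_* \subseteq G_p$, and since $G_p = \overline{\{0\}}^p$ is closed, we would have $\overline{\langle g\rangle_*}^p \subseteq G_p$, contradicting $H \neq G_p$. Having secured $g \in G\backslash G_p$, Lemma \ref{LEM:singly-generated-ideal} immediately gives that $H = \overline{\langle g\rangle_*}^p$ is a minimal $p$-ideal.

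For the ``only if'' direction, assume $H$ is a minimal $p$-ideal. By definition $H \supsetneq G_p$, so I can pick some $g \in H\setminus G_p$. Since $H$ is a $p$-ideal (in particular, pure and closed), it contains $\langle g\rangle_*$ and therefore its closure, so $\overline{\langle g\rangle_*}^p \subseteq H$. Applying Lemma \ref{LEM:singly-generated-ideal} to this $g$, the subgroup $\overline{\langle g\rangle_*}^p$ is a $p$-ideal that properly contains $G_p$. If the inclusion $\overline{\langle g\rangle_*}^p \subseteq H$ were strict, then $H$ would properly contain the $p$-ideal $\overline{\langle g\rangle_*}^p$, contradicting the minimality of $H$. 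Hence $H = \overline{\langle g\rangle_*}^p$, which is singly generated as a $p$-ideal, and of course $H \neq G_p$.

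There is no real obstacle here; the only subtlety is the initial verification that single generation together with $H \neq G_p$ forces the generator to lie outside $G_p$, which is needed to legitimately invoke Lemma \ref{LEM:singly-generated-ideal}. Everything else is bookkeeping with the definitions of ``minimal'' and ``$p$-ideal''.
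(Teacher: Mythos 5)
Your proof is correct and fills in exactly the argument the paper leaves implicit (the corollary is stated there as an "immediate consequence" of Lemma \ref{LEM:singly-generated-ideal} with no written proof). The one point you rightly flag --- that single generation plus $H\neq G_p$ forces the generator outside $G_p$, so that the lemma applies --- is handled correctly, and the rest matches the intended reading of the definitions.
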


\subsection{A Hahn--Banach Theorem for $p$-adic functionals}\label{SUBSEC:functionals}

Let $p$ be a prime, let $G$ be a torsion-free abelian group with $p$-adic pseudometric $d_p$, and let $f:G\to \Z_p$ be a group homomorphism (i.e., a $p$-adic functional).  
Then $f$ is automatically contractive (and hence continuous) with respect to the $p$-adic pseudometric $d_p$ on $G$, meaning that $|f(g_1)-f(g_2)|_p\leq d_p(g_1,g_2)$ for all $g_1,g_2\in G$.  
To see this, it is enough to observe that, if $g\in G$ is $p^n$-divisible---say $g = p^nh$ for some $h\in G$---then $|f(g)|_p = |p^nf(h)|_p = p^{-n}|f(h)|_p\leq p^{-n}$, as $|f(h)|_p\leq 1$.  

If instead $f$ is only defined on a subgroup $H$ of $G$, then the above argument shows that, in order for there to exist an extension of $f$ to all of $G$, it is necessary that $f$ be contractive with respect to $d_p$.  
It turns out that this condition is also sufficient (Proposition \ref{PROP:all-states-extend}).  
But we can say more: if $f: H\to \Z_p$ is contractive with respect to some \defemph{$p$-adic sublinear pseudometric} $t$ (Definition \ref{DEF:t-contractive}), then $f$ extends to a $p$-adic functional on $G$ that is also contractive with respect to $t$.  

\begin{defn}\label{DEF:t-contractive}
Let $p$ be a prime and let $G$ be a torsion-free abelian group with $p$-adic pseudometric $d_p$.  
Let $t : G\times G \to \R_{\geq 0}$ be a pseudometric.  
Let us say that $t$ is a \defemph{$p$-adic sublinear pseudometric} on $G$ if it satisfies the following additional conditions:  
\begin{enumerate}
\item  $t(g_1,g_2) = t(g_1+h,g_2+h)$ for all $g_1,g_2,h\in G$ ($t$ is translation invariant); 
\item  $|n|_pt(g_1,g_2) = t(ng_1,ng_2)$ for all $g_1,g_2\in G$, $n\in \Z$ ($t$ is absolutely scalable); 
\item  $t(g_1,g_2) \leq d_p(g_1,g_2)$ for all $g_1,g_2\in G$ ($t$ is dominated by $d_p$); and 
\item  $t(g_1,g_2) \leq \max\{ t(g_1,g_3),t(g_2,g_3)\}$ for all $g_1,g_2,g_3\in G$ ($t$ satisfies the strong triangle inequality).  
\end{enumerate}

Let $H$ be a subgroup of $G$, and suppose that $f : H\to \Z_p$ is a group homomorphism that is contractive with respect to the $p$-adic sublinear pseudometric $t$, meaning that $|f(h_1)-f(h_2)|_p \leq t(h_1,h_2)$ for all $h_1,h_2\in H$.  
Let us refer to such a homomorphism $f$ as a \defemph{$t$-contractive functional} on $H$.  
\end{defn}

Note in particular that $d_p$ itself is a $p$-adic sublinear pseudometric, and,  if $t$ is another $p$-adic sublinear pseudometric, then any $t$-contractive functional is also a $d_p$-contractive functional.  
Most of the results in this section are statements about the existence of $p$-adic functionals; these functionals are extensions of $t$-contractive functionals for some $p$-adic sublinear pseudometric $t$.  
For most of the proofs, $t$ can be taken to be $d_p$; however, Proposition \ref{PROP:separate-from-subgroup} establishes the existence of $p$-adic functionals with very specific properties, and for this it will be necessary to use a more restrictive $t$ to get sharper information.  

Note that, if $H$ is a subgroup of $G$ and $d_p$ is the $p$-adic pseudometric on $G$, then the $p$-adic pseudometric on $H$ is bounded below by the restriction of $d_p$ to $H$, and this bound may be strict.  
Thus to say that $f$ is a $d_p$-contractive functional on $H$ is stronger than saying that $f$ is a $p$-adic functional on $H$.  
Of course, if $H$ is a pure subgroup of $G$, then the $p$-adic pseudometric on $H$ coincides with the restriction to $H$ of that on $G$.  

It is clear that the trivial homomorphism is a $p$-adic functional, and, if $G$ is $p$-divisible, then this is the only one.  
It is natural to ask whether there exist any non-trivial $p$-adic functionals on $G$ if $G$ is not $p$-divisible.  
Corollary \ref{COR:states-exist} shows that, if $G$ is not $p$-divisible, then non-zero $p$-adic functionals on $G$ do exist.  

The following is the key lemma that makes everything else work.  

\begin{lemma}\label{LEM:extend}
Let $p$ be a prime, let $G$ be a torsion-free abelian group, let $t$ be a $p$-adic sublinear pseudometric on $G$, let $H$ be a subgroup of $G$, and let $g\in G$.  
Suppose there exists a $t$-contractive functional $f: H\to \Z_p$, and define, for each $h\in H$ and $n\in \Z\backslash \{ 0\}$, the closed subset $B(h,n) := \{ \alpha \in\Q_p : |n|_p|\alpha-\frac{1}{n}f(h)|_p \leq t(ng,h)\}$.  
Then
\begin{enumerate}
\item  $\bigcap_{h\in H,n\in\Z\backslash \{ 0\}} B(h,n)$ is non-empty.  
\item  If $f$ extends to a $t$-contractive functional $f'$ on $H + \Z g$, then $f'(g)\in \bigcap_{h\in H,n\in\Z\backslash \{ 0\}} B(h,n)$.  
\item  If $\beta \in \bigcap_{h\in H,n\in\Z\backslash \{ 0\}}B(h,n)$, then $f$ extends to a $t$-contractive functional $f'$ on $H + \Z g$ such that $f'(g) = \beta$.  
\end{enumerate}
\end{lemma}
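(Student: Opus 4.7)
The plan is to dispatch parts (2) and (3) by direct computation and focus the real work on (1). Throughout I will view each $B(h,n)$ as a closed ball in $\Q_p$ with centre $c_{h,n} := f(h)/n$ and radius $r_{h,n} := t(ng,h)/|n|_p$.

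For (2), any $t$-contractive extension $f'$ satisfies $nf'(g)-f(h) = f'(ng-h)$, so
\[
|nf'(g)-f(h)|_p = |f'(ng-h)-f'(0)|_p \leq t(ng-h,0) = t(ng,h)
\]
by translation invariance, and dividing by $|n|_p$ shows $f'(g)\in B(h,n)$. For (3), granting (1) and the observation (made below) that every $\beta\in\bigcap B(h,n)$ lies in $\Z_p$, I define $f'(h+ng):=f(h)+n\beta$. Well-definedness reduces to the case $h_1+n_1g=h_2+n_2g$ with $n:=n_2-n_1\neq 0$: then $h_0:=h_1-h_2=ng\in H$, and $\beta\in B(h_0,n)$ gives $|n\beta-f(h_0)|_p\leq t(ng,h_0)=0$, so $n\beta=f(h_1)-f(h_2)$. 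Linearity of $f'$ is immediate. For $t$-contractivity, translation invariance reduces the problem to bounding $|f'(h+ng)|_p$ by $t(h+ng,0)$ for $h\in H$, $n\in\Z$; the case $n=0$ is $t$-contractivity of $f$, and for $n\neq 0$ the condition $\beta\in B(-h,n)$ reads $|n\beta+f(h)|_p\leq t(ng,-h)=t(h+ng,0)$, which is exactly what is needed.

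The main obstacle is (1). The strategy exploits the ultrametric geometry of $\Q_p$: pairwise intersecting closed balls are nested, so a family of pairwise intersecting closed balls is totally ordered by inclusion and in particular has the finite intersection property; since closed balls in $\Q_p$ are compact, this is enough to conclude non-emptiness of the total intersection. The central computation is therefore to establish pairwise intersection of $B(h_1,n_1)$ and $B(h_2,n_2)$, which in an ultrametric is equivalent to $|c_{h_1,n_1}-c_{h_2,n_2}|_p \leq \max(r_{h_1,n_1},r_{h_2,n_2})$. Expanding,
\begin{align*}
|c_{h_1,n_1}-c_{h_2,n_2}|_p &= \frac{|f(n_2h_1-n_1h_2)|_p}{|n_1n_2|_p} \leq \frac{t(n_2h_1,n_1h_2)}{|n_1n_2|_p}\\
&\leq \frac{\max\{t(n_2h_1,n_1n_2g),\,t(n_1h_2,n_1n_2g)\}}{|n_1n_2|_p} = \max(r_{h_1,n_1},r_{h_2,n_2}),
\end{align*}
using in turn the $t$-contractivity of $f$ combined with translation invariance, the strong triangle inequality for $t$, and absolute scalability. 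Fixing any one $B(h_0,n_0)$, which is compact, the remaining $B(h,n)\cap B(h_0,n_0)$ are closed subsets with the finite intersection property, so the total intersection is non-empty. Finally, $\bigcap B(h,n)\subseteq B(0,1) = \{\alpha\in\Q_p : |\alpha|_p\leq t(g,0)\}$, and $t(g,0)\leq d_p(g,0)\leq 1$, so this intersection lies inside $\Z_p$, delivering the $p$-adic integrality needed in (3).
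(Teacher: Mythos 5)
Your proposal is correct and follows essentially the same route as the paper: parts (2) and (3) by the same direct computations (including the radius-zero ball argument for well-definedness and the $B(-h,n)$ condition for contractivity), and part (1) by reducing to pairwise intersections of ultrametric balls via the strong triangle inequality and absolute scalability, then invoking compactness and the finite intersection property. The only cosmetic differences are that you phrase the pairwise-intersection computation in terms of centres and radii and anchor compactness in a single closed ball, where the paper intersects everything with $B(0,1)\subseteq\Z_p$ and uses compactness of $\Z_p$.
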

\begin{proof}
To prove statement (1), note that $B(0,1)$ consists of all $\alpha\in \Q_p$ such that $|\alpha|_p\leq t(g,0) \leq d_p(g,0) \leq 1$, so $B(0,1)\subseteq \Z_p$.  
Then rewrite the intersection:  
\[
\bigcap_{h\in H,n\in\Z\backslash\{ 0\}}B(h,n) = \bigcap_{h\in H,n\in\Z\backslash \{ 0\}} (B(0,1)\cap B(h,n)), 
\]
the latter of which is an intersection of closed subsets of $\Z_p$.  
$\Z_p$ is compact, so the collection of all closed subsets of the form $B(0,1)\cap B(h,n)$ has non-empty intersection if and only if it has the finite intersection property.  
But $\bigcap_{i=1}^k(B(0,1)\cap B(h_i,n_i)) = B(0,1)\cap B(h_1,n_1)\cap \cdots \cap B(h_k,n_k)$, so it suffices to check that the collection of all subsets of the form $B(h,n)$ has the finite intersection property.  

Because $|\cdot |_p$ satisfies the strong triangle inequality, if two closed disks in $\Q_p$ have non-empty intersection, then one is contained in the other.  
Thus if $B(h_1,n_1)\cap B(h_2,n_2)$ is non-empty, it equals either $B(h_1,n_1)$ or $B(h_2,n_2)$.  
So to show that a finite intersection of closed disks is non-empty, it suffices to show that all pairwise intersections are non-empty.  
So let us verify to that any pair of subsets $B(h_1,n_1)$ and $B(h_2,n_2)$ with $h_1,h_2\in H$, $n_1,n_2\in \Z\backslash \{ 0\}$ has non-empty intersection.  

By the strong triangle inequality, 
\[
t(n_2h_1,n_1h_2) \leq \max \{ t(n_1n_2g,n_2h_1),t(n_1n_2g,n_1h_2)\}.  
\]
Suppose without loss of generality that $t(n_1n_2g,n_2h_1)$ is the maximum of these two distances.  
Then 
\begin{align*}
|n_1n_2|_p |\frac{1}{n_1}f(h_1)-\frac{1}{n_2}f(h_2)|_p
 & = |n_2f(h_1)-n_1f(h_2)|_p \\
 & = |f(n_2h_1)-f(n_1h_2)|_p \\
 & \leq t(n_2h_1,n_1h_2) \\
 & \leq t(n_1n_2g,n_2h_1) = |n_2|_pt(n_1g,h_1),  
\end{align*}
where the last equality comes from the absolute scalability of $t$.  

Thus $|n_1|_p|\frac{1}{n_1}f(h_1)-\frac{1}{n_2}f(h_2)|_p $ $\leq t(n_1g,h_1)$, so $\frac{1}{n_2}f(h_2)\in B(h_1,n_1)$.  
It is clearly in $B(h_2,n_2)$, so the intersection is non-empty.  

Now let us prove statement (2).  

For all $h\in H$ and $n\in\Z\backslash\{ 0\}$, $|n|_p|f'(g)-\frac{1}{n}f(h)|_p $ $ = |f'(ng) - f'(h)|_p$, which is bounded above by $t(ng,h)$ because $f'$ is a $t$-contraction.  
Thus $f'(g)\in B(h,n)$.  

Now let us prove statement (3).  

Define $f': H + \Z g \to \Q_p$ as follows.  
Given $g'\in H + \Z g$, write $g' = h + ng$ for some $h\in H$ and $n\in \Z$.  
Then let $f'(g') = f(h) + n\beta\in \Q_p$.  

$f'$ is well-defined.  
To see this, suppose $g' = h_1+n_1g$ and $g' = h_2 + n_2g$; then $(n_1-n_2)g' - (h_2-h_1) = 0$.  
If $n_1 = n_2$, then $h_1 = h_2$, in which case $f(h_1)+n_1\beta = f(h_2)+n_2\beta$.  
If $n_2\neq n_2$, then $d_p((n_1-n_2)g,h_2-h_1) = 0$, so $B(h_2-h_1,n_1-n_2)$ is the singleton $\{ \frac{1}{n_1-n_2}f(h_2-h_1)\}$, in which case $\beta = \frac{1}{n_1-n_2}f(h_2-h_1)$ and $f(h_1) + n_1\beta = f(h_2)+n_2\beta$.  

It is easy to verify that $f'$ is a group homomorphism.  

Given that $f'$ is a group homomorphism and $t$ is translation invariant, to see that $f'$ is a $t$-contractive functional it is enough to check that $|f(g')|_p\leq t(g',0)$ for all $g'\in H+\Z g$.  
If $g'\in H$, this follows immediately from the fact that $f$ is a $t$-contractive functional, so we may suppose that $g'\notin H$.  
This means that $g' = h + ng$ for some $h\in H$ and $n\in \Z\backslash \{ 0\}$.  
Then 
\begin{align*}
|f'(g')|_p & = |f(h)+n\beta|_p \\
 & = |n|_p|\beta - \frac{1}{n}f(-h)|_p \\
 & \leq t(ng,-h) = t(h+ng,0),
\end{align*}
where the inequality on the last line comes from the fact that $\beta\in B(-h,n)$.  
\end{proof}

Lemma \ref{LEM:extend} allows us to extend a $t$-contractive functional on a subgroup of $G$ to one on a larger subgroup, but to show that non-trivial $p$-adic functionals exist, we need to start with a non-trivial $t$-contractive functional on some subgroup of $G$.  
The natural place to begin is with a cyclic subgroup.  

The following corollary, which deals with functionals on cyclic subgroups of $G$, follows from Lemma \ref{LEM:extend} by taking $H = \{ 0\}$ and $t = d_p$.  
\begin{cor}\label{COR:rank-one-seed}
Let $p$ be a prime, let $G$ be a torsion-free abelian group with $p$-adic pseudometric $d_p$, and let $g\in G$.  
Then, given any $\beta\in\Z_p$ for which $|\beta|_p \leq d_p(g,0)$, there exists a $d_p$-contractive functional $f:\langle g\rangle\to \Z_p$ such that $f(g) = \beta$.  
\end{cor}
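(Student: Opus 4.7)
The plan is to invoke Lemma \ref{LEM:extend} directly with the trivial subgroup $H = \{0\}$, the pseudometric $t = d_p$, and the zero functional $f : H \to \Z_p$, then apply part (3) of that lemma to promote $\beta$ to the value of an extension on $H + \Z g = \langle g\rangle$.

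The only real thing to check is that the given $\beta$ lies in the intersection $\bigcap_{h\in H, n\in\Z\backslash\{0\}} B(h,n)$ that appears in Lemma \ref{LEM:extend}. With $H = \{0\}$, the only candidate for $h$ is $0$, and $f(0) = 0$, so for each $n\in\Z\backslash\{0\}$ the set $B(0,n)$ simplifies to
\[
B(0,n) = \{\alpha \in \Q_p : |n|_p |\alpha|_p \leq d_p(ng,0)\} = \{\alpha \in \Q_p : |\alpha|_p \leq d_p(g,0)\},
\]
using the absolute scalability of $d_p$. Thus the intersection is the single closed disk $\{\alpha \in \Z_p : |\alpha|_p \leq d_p(g,0)\}$ (which sits in $\Z_p$ because $d_p(g,0)\leq 1$), and the hypothesis $|\beta|_p \leq d_p(g,0)$ with $\beta\in\Z_p$ places $\beta$ inside it.

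Having verified membership, Lemma \ref{LEM:extend}(3) supplies a $d_p$-contractive group homomorphism $f : \langle g\rangle \to \Q_p$ with $f(g) = \beta$. To finish, I would note that $f$ is actually $\Z_p$-valued: any element of $\langle g\rangle$ has the form $ng$ for $n\in\Z$, so $f(ng) = n\beta \in \Z_p$ since $\beta\in\Z_p$.

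There is no serious obstacle here; the corollary is essentially a direct specialization of Lemma \ref{LEM:extend}, and the only bookkeeping is the observation that the hypothesis $|\beta|_p \leq d_p(g,0)$ is exactly what is required so that the sole constraint $B(0,n)$ is satisfied.
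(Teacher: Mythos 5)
Your proposal is correct and is exactly the paper's intended argument: the corollary is stated there as following from Lemma \ref{LEM:extend} with $H=\{0\}$ and $t=d_p$, and your verification that the intersection of the sets $B(0,n)$ reduces via absolute scalability to the single disk $\{\alpha : |\alpha|_p\leq d_p(g,0)\}\subseteq\Z_p$ is the only computation needed. The closing remark that $f$ is $\Z_p$-valued is harmless but already guaranteed, since a $t$-contractive functional is $\Z_p$-valued by Definition \ref{DEF:t-contractive}.
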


The next proposition is an analogue of the Hahn--Banach Theorem for the setting of torsion-free abelian groups.  
\begin{prop}\label{PROP:all-states-extend}
Let $p$ be a prime and let $G$ be a torsion-free abelian group with $p$-adic sublinear pseudometric $t$.  
Suppose that $H$ is a subgroup of $G$ and $f$ is a $t$-contractive functional on $H$.  
Then there exists a $t$-contractive $p$-adic functional $f'$ on $G$ that extends $f$.  
\end{prop}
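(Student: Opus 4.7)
The plan is the standard Zorn's lemma approach, with Lemma \ref{LEM:extend} doing all the real work of the induction step.

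First, I would consider the poset $\mathcal{P}$ whose elements are pairs $(K, f_K)$, where $K$ is a subgroup of $G$ with $H \subseteq K$ and $f_K : K \to \Z_p$ is a $t$-contractive functional extending $f$, ordered by $(K_1, f_{K_1}) \leq (K_2, f_{K_2})$ when $K_1 \subseteq K_2$ and $f_{K_2}$ restricts to $f_{K_1}$ on $K_1$. This is non-empty because $(H, f)$ itself lies in $\mathcal{P}$.

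Next I would verify that $\mathcal{P}$ is chain-complete. Given a totally ordered chain $\{ (K_\lambda, f_{K_\lambda})\}_{\lambda \in \Lambda}$, set $K := \bigcup_\lambda K_\lambda$ and define $f_K : K \to \Z_p$ by $f_K(k) := f_{K_\lambda}(k)$ for any $\lambda$ with $k \in K_\lambda$. The chain condition makes $K$ a subgroup and $f_K$ well-defined and a group homomorphism, and $t$-contractivity is inherited pointwise since any two elements of $K$ lie together in some $K_\lambda$. So $(K, f_K)$ is an upper bound in $\mathcal{P}$, and Zorn's lemma yields a maximal element $(K^\ast, f^\ast)$.

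The final step is to show $K^\ast = G$. Suppose for contradiction that there exists $g \in G \setminus K^\ast$. Applying Lemma \ref{LEM:extend} to $K^\ast$, $f^\ast$ and $g$, part (1) provides some $\beta \in \bigcap_{k \in K^\ast, n \in \Z \setminus \{0\}} B(k,n)$, and then part (3) produces a $t$-contractive extension $f^{\ast\ast}$ of $f^\ast$ to the strictly larger subgroup $K^\ast + \Z g$ with $f^{\ast\ast}(g) = \beta$. This contradicts the maximality of $(K^\ast, f^\ast)$, so $K^\ast = G$ and $f^\ast$ is the desired extension.

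The main obstacle has already been handled in Lemma \ref{LEM:extend}; here the only subtle point is the verification that the union of a chain inherits $t$-contractivity, which is immediate because $t$-contractivity is a condition on pairs of elements and any finite collection of elements lies in a single member of the chain. Everything else is a clean application of Zorn's lemma.
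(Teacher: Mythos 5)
Your proposal is correct and follows essentially the same route as the paper: the same poset of pairs (subgroup, $t$-contractive extension), the same chain-completeness verification via unions, and the same use of parts (1) and (3) of Lemma \ref{LEM:extend} to contradict maximality. No gaps.
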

\begin{proof}
Consider the set $\mathcal{K}$ of pairs $(f',H')$, where $H'$ is a subgroup of $G$ that contains $H$ and $f'$ is a $t$-contractive functional on $H'$ that extends $f$.  
$\mathcal{K}$ is non-empty because it contains $(f,H)$.  
Give $\mathcal{K}$ a partial ordering by saying that $(f_1',H_1')\leq (f_2',H_2')$ if $H_1'\subseteq H_2'$ and $f_2'$ extends $f_1'$.  
Any chain $((f_\lambda',H_\lambda'))_{\lambda\in\Lambda}$ in $\mathcal{K}$ is bounded above---an upper bound is $(F',\bigcup_{\lambda\in\Lambda}H_\lambda')$, where $F'$ is defined by $F'(h) = f_\lambda'(h)$ for any $\lambda$ for which $h\in H_\lambda'$.  

Thus by Zorn's Lemma there exists a maximal pair $(f_{\max},H_{\max})$.  
Suppose for a contradiction that $H_{\max}\neq G$.  
Then there exists $g'\in G\backslash H_{\max}$.  
By parts (1) and (3) of Lemma \ref{LEM:extend}, $f_{\max}$ extends to a $t$-contractive functional $f_{\max}'$ on the subgroup $H+\Z g'$.  
$H+\Z g'$ contains $H_{\max}$, and hence contains $H$, and $f_{\max}'$ extends $f_{\max}$, and hence extends $f$.  
Thus the pair $(f_{\max}',H+\Z g')$ is in $\mathcal{K}$, and is strictly greater than $(f_{\max},H_{\max})$, contradicting the maximality of that pair.  
Hence $H_{\max} = G$ after all.  
\end{proof}

Proposition \ref{PROP:all-states-extend} can be combined with Corollary \ref{COR:rank-one-seed} and Lemma \ref{LEM:closure} to produce the following corollaries, which say that any $g\in G$ realizes its maximum possible value on some functional, and $p$-adic functionals can separate a pure subgroup of $G$ from an element $g$ not in its closure.  
\begin{cor}\label{COR:state-hits-distance}
Let $p$ be a prime, let $G$ be a torsion-free abelian group with $p$-adic pseudometric $d_p$, and let $g\in G$.  
Then there exists a $p$-adic functional $f$ on $G$ such that $|f(g)|_p = d_p(g,0)$.  
\end{cor}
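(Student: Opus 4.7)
The plan is to build the desired functional in two stages: first construct it on the cyclic subgroup generated by $g$ so that it already attains the value we want on $g$, and then extend to all of $G$ using the Hahn--Banach analogue. Since every $p$-adic functional $f$ on $G$ is automatically $d_p$-contractive, we always have $|f(g)|_p \leq d_p(g,0)$; the content of the corollary is that this upper bound can be realized.

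First I would treat the trivial case: if $d_p(g,0) = 0$, then the zero homomorphism $f \equiv 0$ on $G$ is a $p$-adic functional with $|f(g)|_p = 0 = d_p(g,0)$, and we are done. So we may assume $d_p(g,0) = p^{-k}$ for some non-negative integer $k$ (where $k$ is the largest such that $p^k \mid g$ in $G$). Choose the target value $\beta := p^k \in \Z_p$, which satisfies $|\beta|_p = p^{-k} = d_p(g,0)$, and in particular $|\beta|_p \leq d_p(g,0)$.

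Now apply Corollary \ref{COR:rank-one-seed} to obtain a $d_p$-contractive functional $f_0 : \langle g\rangle \to \Z_p$ with $f_0(g) = \beta$. Since $d_p$ is itself a $p$-adic sublinear pseudometric on $G$, Proposition \ref{PROP:all-states-extend} (applied with $H = \langle g\rangle$, $t = d_p$, and $f = f_0$) yields a $d_p$-contractive $p$-adic functional $f : G \to \Z_p$ extending $f_0$. Then $f(g) = f_0(g) = \beta$, so $|f(g)|_p = |\beta|_p = d_p(g,0)$, as required.

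There is no real obstacle here: the work has already been done in Corollary \ref{COR:rank-one-seed} and Proposition \ref{PROP:all-states-extend}. The only thing to verify is that the choice $\beta = p^k$ saturates the hypothesis $|\beta|_p \leq d_p(g,0)$ of the seed lemma with equality, which it does by construction, and that passing to the extension preserves the value $f(g)$, which is automatic since an extension agrees with $f_0$ on $\langle g\rangle$.
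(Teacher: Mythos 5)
Your proof is correct and follows essentially the same route as the paper: seed a $d_p$-contractive functional on $\langle g\rangle$ via Corollary \ref{COR:rank-one-seed} with a value $\beta$ of absolute value exactly $d_p(g,0)$, then extend by Proposition \ref{PROP:all-states-extend}. The explicit choice $\beta = p^k$ and the separate treatment of the case $d_p(g,0)=0$ are harmless elaborations of what the paper leaves implicit.
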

\begin{proof}
By Corollary \ref{COR:rank-one-seed}, there exists a $d_p$-contractive functional $f$ on $\langle g\rangle$ such that $|f(g)|_p = d_p(g,0)$.  
By Proposition \ref{PROP:all-states-extend}, $f$ extends to a $p$-adic functional on $G$.  
\end{proof}

\begin{cor}\label{COR:states-exist}
Let $p$ be a prime and let $G$ be a torsion-free abelian group that is not $p$-divisible.  
Then there exists a non-zero $p$-adic functional on $G$.  
\end{cor}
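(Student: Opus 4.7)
The plan is to combine the hypothesis that $G$ is not $p$-divisible with Corollary \ref{COR:state-hits-distance} in essentially one step. First I would observe that the assumption ``$G$ is not $p$-divisible'' hands us an element $g \in G$ for which no $h \in G$ satisfies $ph = g$; in particular, $g$ is not divisible by $p^k$ for any $k \geq 1$. Unpacking Definition \ref{DEF:pseudometric}, the largest $k$ such that $p^k$ divides $g - 0 = g$ is therefore $k = 0$, giving $d_p(g,0) = p^0 = 1$.

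Next I would apply Corollary \ref{COR:state-hits-distance} to this particular $g$: it produces a $p$-adic functional $f : G \to \Z_p$ with $|f(g)|_p = d_p(g,0) = 1$. Since $|f(g)|_p = 1 \neq 0$, we have $f(g) \neq 0$, so $f$ is a non-zero $p$-adic functional on $G$, as required.

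There is no real obstacle here; the work has already been done in the preceding results. The only thing to verify is the elementary computation that non-$p$-divisibility of a single element $g$ translates into $d_p(g,0) = 1$ under the pseudometric of Definition \ref{DEF:pseudometric}, after which Corollary \ref{COR:state-hits-distance} supplies the functional. This is essentially an unpacking of definitions plus one invocation of a previous corollary, so the proof should be only two or three sentences long.
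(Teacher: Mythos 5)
Your proof is correct and matches the paper's intended argument: the corollary is stated without proof immediately after Corollary \ref{COR:state-hits-distance}, precisely because a non-$p$-divisible element $g$ has $d_p(g,0)=1$ and that corollary then yields $f$ with $|f(g)|_p = 1 \neq 0$. Nothing to add.
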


\begin{cor}\label{COR:separating-state}
Let $p$ be a prime, let $G$ be a torsion-free abelian group with pure subgroup $H$, and suppose that $g$ is an element of $G$ that is not in $\overline{H}^p$.  
Then there exists a $p$-adic functional $f$ on $G$ such that $f(h) = 0$ for all $h\in H$, but $f(g)\neq 0$.  
\end{cor}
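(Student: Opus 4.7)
The plan is to construct a suitable $p$-adic sublinear pseudometric $t$ on $G$, build a $t$-contractive functional $f_0$ on the subgroup $H + \Z g$ that vanishes on $H$ and is nonzero on $g$, and then invoke Proposition \ref{PROP:all-states-extend} to extend $f_0$ to a $t$-contractive $p$-adic functional on all of $G$. Any such extension automatically has the required properties.

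Since $H$ is pure, the quotient $G/H$ is torsion-free and so carries its own $p$-adic pseudometric, which I will denote $\overline{d_p}$. Define $t : G \times G \to \R_{\geq 0}$ by $t(g_1,g_2) := \overline{d_p}(g_1 + H,\ g_2 + H)$. This $t$ is a $p$-adic sublinear pseudometric on $G$: translation invariance and the strong triangle inequality descend directly from $\overline{d_p}$; absolute scalability uses that $G/H$ is torsion-free, so that Definition \ref{DEF:pseudometric} applies in the quotient; and the domination $t \leq d_p$ holds because $p^k$-divisibility of $g_1 - g_2$ in $G$ passes to $p^k$-divisibility of $(g_1 + H) - (g_2 + H)$ in $G/H$. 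The hypothesis $g \notin \overline{H}^p$ unwinds, via Definition \ref{DEF:pseudometric}, to the statement that $g + H$ is not divisible by arbitrarily high powers of $p$ in $G/H$; equivalently, $\overline{d_p}(g + H, 0) = p^{-k}$ for some finite $k \geq 0$.

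Set $\alpha := p^k \in \Z_p$ and define $f_0 : H + \Z g \to \Z_p$ by $f_0(h + ng) := n\alpha$. To see $f_0$ is well-defined, suppose $h_1 + n_1 g = h_2 + n_2 g$ with $n_1 \neq n_2$; then $(n_1 - n_2) g \in H$, and purity of $H$ forces $g \in H$, contradicting $g \notin \overline{H}^p \supseteq H$. Hence $n_1 = n_2$ and $h_1 = h_2$. The map $f_0$ is clearly a group homomorphism vanishing on $H$ and sending $g$ to $\alpha \neq 0$. For $t$-contractiveness, translation invariance of $t$ together with absolute scalability of $\overline{d_p}$ give
\[
|f_0(h_1 + n_1 g) - f_0(h_2 + n_2 g)|_p = |n_1 - n_2|_p\,|\alpha|_p = |n_1 - n_2|_p\,\overline{d_p}(g + H, 0) = t(h_1 + n_1 g,\ h_2 + n_2 g).
\]
Proposition \ref{PROP:all-states-extend} then extends $f_0$ to a $t$-contractive $p$-adic functional $f$ on $G$, and this $f$ still vanishes on $H$ while $f(g) = \alpha \neq 0$. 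The only real obstacle is the bookkeeping for the pullback pseudometric, namely checking the four axioms of Definition \ref{DEF:t-contractive} for $t$; once this is in place, the Hahn--Banach analogue immediately delivers the separating functional.
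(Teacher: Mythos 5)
Your proof is correct, but it reaches the functional on $H+\Z g$ by a genuinely different route than the paper. The paper keeps $t=d_p$ throughout: it starts from the zero functional on $H$, invokes Lemma \ref{LEM:closure} to bound the quantities $\frac{1}{|n|_p}d_p(ng,h)$ below by $\epsilon$, and then reads off from part (3) of Lemma \ref{LEM:extend} that this functional extends to $H+\Z g$ with any prescribed value $\beta$ satisfying $|\beta|_p\leq\epsilon$ at $g$. You instead build a bespoke sublinear pseudometric by pulling back the $p$-adic pseudometric of the torsion-free quotient $G/H$ --- the same device the paper reserves for Proposition \ref{PROP:separate-from-subgroup} --- write down the functional $h+ng\mapsto np^k$ explicitly, and check $t$-contractiveness by hand, bypassing Lemma \ref{LEM:closure} and the ball machinery of Lemma \ref{LEM:extend} entirely. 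Your translation of the hypothesis $g\notin\overline{H}^p$ into the finiteness of $\overline{d_p}(g+H,0)=p^{-k}$ is exactly right, and the four axioms for your $t$ do descend from $G/H$ as claimed (absolute scalability needs $G/H$ torsion-free, which purity supplies; this is also where well-definedness of $f_0$ comes from). Your approach even yields slightly more than the statement asks: the extension satisfies $|f(x)|_p\leq t(x,0)=\overline{d_p}(x+H,0)$ for every $x\in G$, so $f$ annihilates all of $\overline{H}^p$, not just $H$, and attains the largest possible value $|f(g)|_p=\inf_{h\in H}d_p(g,h)$. The paper's version is shorter only because Lemmas \ref{LEM:closure} and \ref{LEM:extend} are already on the shelf; both arguments funnel through Proposition \ref{PROP:all-states-extend} for the final extension to $G$.
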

\begin{proof}
Let $d_p$ denote the $p$-adic pseudometric on $G$.  
The zero homomorphism $f' : H\to \Z_p$ is a $d_p$-contractive functional.  

To say that $g$ is not in $\overline{H}^p$ means that there is some $\epsilon > 0$ such that $d_p(g,h) \geq  \epsilon$ for all $h\in H$.  
By Lemma \ref{LEM:closure}, this means that the values $\{ \frac{1}{|n|_p}d_p(ng,h) : h\in H, n\in \Z\backslash \{ 0\}\}$ are bounded below by $\epsilon$.  
But then, by Lemma \ref{LEM:extend} part (3), for any $\beta$ in the ideal $\{ \alpha\in\Z_p : |\alpha|_p\leq \epsilon\}$ of $\Z_p$, $f'$ extends to a $d_p$-contractive functional $f''$ on $H+\Z g$ such that $f''(g) = \beta$.  
In particular, it is possible to find a non-zero $\beta$ with this property.  
Then by Proposition \ref{PROP:all-states-extend}, $f''$ extends to the required $p$-adic functional $f$ on $G$.  
\end{proof}

The following proposition will be useful in Section \ref{SEC:completions}, when we consider finite-index subgroups of a torsion-free abelian group $G$.  

\begin{prop}\label{PROP:separate-from-subgroup}
Let $p$ be a prime, let $G$ be a torsion-free abelian group, and let $H$ be a subgroup of $G$.  
Suppose $m\in \N$ is such that the order of any element $g+H\in G/H$ divides $p^m$.  
Then for any $g\in G\backslash H$ there exists a $p$-adic functional $f$ on $G$ with the property that $|f(g)|_p >p^{-m} \geq |f(h)|_p$ for all $h\in H$.  
\end{prop}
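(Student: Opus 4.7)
The plan is to exhibit $f$ directly, by pulling back a carefully-chosen $p$-adic functional on $H$ through multiplication by $p^m$. The hypothesis says precisely that $p^m g' \in H$ for every $g' \in G$, so given any $p$-adic functional $\hat f : H \to \Z_p$ the formula
\[
f(g') := \hat f(p^m g')
\]
defines a homomorphism $G \to \Z_p$. Moreover $f(h) = p^m \hat f(h)$ for each $h \in H$, which automatically forces $|f(h)|_p \leq p^{-m}$ on $H$. Thus the only remaining task is to arrange $|\hat f(p^m g)|_p > p^{-m}$ by choosing $\hat f$ appropriately.

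Let $p^k$ denote the exact order of $g + H$ in $G/H$, so $1 \leq k \leq m$ (since $g \notin H$), and put $h_1 := p^m g \in H$. The key calculation is that $d_p(h_1, 0) = p^{-(m-k)}$ when the pseudometric is computed on $H$. That $d_p(h_1,0) \leq p^{-(m-k)}$ is visible from the factorisation $h_1 = p^{m-k}(p^k g)$ together with $p^k g \in H$. For the reverse inequality, if one had $h_1 = p^{m-k+1} h'$ for some $h' \in H$, then $p^{m-k+1}(p^{k-1} g - h') = 0$ in the torsion-free group $G$, forcing $p^{k-1} g = h' \in H$ and contradicting the minimality of $k$.

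Now apply Corollary \ref{COR:state-hits-distance} to the torsion-free group $H$ and the element $h_1$ to obtain a $p$-adic functional $\hat f : H \to \Z_p$ with $|\hat f(h_1)|_p = p^{-(m-k)}$. The corresponding $f$ defined above then satisfies $|f(g)|_p = |\hat f(h_1)|_p = p^{-(m-k)} \geq p^{-(m-1)} > p^{-m}$, completing the proof. The main obstacle, I think, is identifying this construction in the first place: it is tempting to try to build a $p$-adic sublinear pseudometric $t$ on $G$ that is bounded by $p^{-m}$ on $H$ but strictly larger at $g$ and then invoke Proposition \ref{PROP:all-states-extend} directly, but the natural candidates — for instance, $t$ defined to equal $p^{-m} d_p$ on pairs whose difference lies in $H$ and $d_p$ elsewhere — fail the absolute-scalability axiom precisely because of the $p$-torsion in $G/H$. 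The construction above sidesteps this by assembling $f$ globally as a $p^m$-rescaling of a functional on $H$, trading the pseudometric gymnastics for the distance computation in the second paragraph.
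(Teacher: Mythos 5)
Your proof is correct, and it takes a genuinely different route from the paper. The paper's proof constructs an auxiliary $p$-adic sublinear pseudometric $t$ on $G$ (equal to $p^{-m}d_p^H$ on pairs whose difference lies in $H$, and to $p^{-m}p^{v(g_1-g_2)}$ otherwise, where $v$ measures the order in $G/H$), spends most of its length verifying the four axioms of Definition \ref{DEF:t-contractive} --- your closing remark correctly identifies absolute scalability in the case $n(g_1-g_2)\in H$, $g_1-g_2\notin H$ as the delicate point, which the paper handles by a separate computation --- and then invokes the general-$t$ Hahn--Banach theorem (Proposition \ref{PROP:all-states-extend}) to extend a functional on $\langle g\rangle$ achieving $t(g,0)=p^{k-m}$. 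Your construction reaches the same functional value $p^{-(m-k)}$ at $g$ by composing a functional on $H$ (supplied by Corollary \ref{COR:state-hits-distance} applied to $H$, which only needs the basic $d_p$ version of Hahn--Banach) with the homomorphism $g'\mapsto p^m g'$ from $G$ into $H$; the distance computation $d_p^H(p^m g,0)=p^{-(m-k)}$ is correct, with the lower bound following from torsion-freeness exactly as you argue. What your approach buys is a substantial simplification: this proposition is the only place in the paper where a sublinear pseudometric other than $d_p$ itself is needed, so your argument would allow the entire apparatus of general $t$ in Definition \ref{DEF:t-contractive} and Lemma \ref{LEM:extend} to be specialized to $t=d_p$ throughout. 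What the paper's approach buys is a demonstration that the Hahn--Banach framework is flexible enough to encode such separation constraints directly into the pseudometric, which may matter for generalizations where no convenient global homomorphism into $H$ is available.
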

\begin{proof}
Let $d_p$ denote the $p$-adic pseudometric on $G$, and let $d_p^H$ denote the $p$-adic pseudometric on $H$ (which might be different from the restriction of $d_p$ to $H$).  
For $g\in G$, define 
\begin{align*}
v : G & \to \N\cup \{ 0\} \\
g & \mapsto \min \{ k\in \N\cup\{ 0\} : p^kg\in H\}.  
\end{align*}
Then define a function $t : G\times G \to \R_{\geq 0}$ as follows:  
\begin{align*}
t(g_1,g_2) & = \left\{ \begin{array}{rl} p^{-m}d_p^H(g_1,g_2) & \text{ if } g_1-g_2\in H \\
p^{-m}p^{v(g_1-g_2)} & \text{ otherwise.} \end{array}\right. 
\end{align*}

Let us show that $t$ is a $p$-adic sublinear pseudometric.  

Choose elements $g_1,g_2$, and $g_3\in G$.  
The proof that $t$ satisfies the strong triangle inequality can be broken into cases, depending upon which of the differences $g_1-g_2$, $g_1-g_3$, $g_3-g_2$ lie in $H$.  
If any two of these are in $H$, then so is the third, so the possibilities are (1) all are in $H$; (2) none is in $H$; (3) $g_1-g_2\in H$, $g_1-g_3,g_3-g_2\notin H$; and (4) without loss of generality, $g_1-g_2,g_3-g_2\notin H$, $g_1-g_3\in H$.  

{\bf Case (1):}  $g_1-g_2,g_1-g_3,g_3-g_2\in H$.  
Then  
\begin{align*}
t(g_1,g_2) & = p^{-m}d_p^H(g_1,g_2) \\ 
 & \leq p^{-m}\max \{ d_p^H(g_1,g_3),d_p^H(g_3,g_2)\} \\ 
 & = \max\{ t(g_1,g_3),t(g_3,g_2)\}.  
\end{align*}

{\bf Case (2):}  $g_1-g_2,g_1-g_3,g_3-g_2\notin H$.  
Let $v_0 = \max\{ v(g_1-g_3),v(g_3-g_2)\}$.  
Then $p^{v_0}(g_1-g_3), p^{v_0}(g_3-g_2)\in H$, and 
\[
p^{v_0} (g_1-g_2) = p^{v_0} (g_1-g_3)  + p^{v_0} (g_3-g_2) \in H,
\]
so $v(g_1-g_2)\leq v_0 = \max\{ v(g_1-g_3),v(g_3-g_2)\}$, which means
\begin{align*}
t(g_1,g_2) = p^{-m}p^{v(g_1-g_2)} & \leq \max\{p^{-m}p^{v(g_1-g_3)},p^{-m}p^{v(g_3-g_2)}\} \\
 & = \max\{ t(g_1,g_3),t(g_3,g_2)\}.  
\end{align*}

{\bf Case (3):}  $g_1-g_2\in H$, $g_1-g_3,g_3-g_2\notin H$.  
In this case $t(g_1,g_3),$ $t(g_3,g_2) \geq p^{-m}$, so 
\[
t(g_1,g_2) = p^{-m}d_p^H(g_1,g_2)  \leq p^{-m}  \leq \max\{ t(g_1,g_3),t(g_3,g_2)\}.  
\]

{\bf Case (4):}  $g_1-g_2,g_3-g_2\notin H$, $g_1-g_3\in H$.  
In this case $p^k (g_1-g_2) \in H$ if and only if $p^k(g_3-g_2)\in H$, so 
\[
t(g_1,g_2) = p^{-m}p^{v(g_1-g_2)} = p^{-m}p^{v(g_3-g_2)} = t(g_3,g_2).
\]

Thus $t$ satisfies the strong triangle inequality.  
Combined with the obvious facts that $t$ is symmetric and $t(g,g) = 0$ for all $g\in G$, this implies that $t$ is a pseudometric.  

$t$ is clearly translation invariant, as the formulas $v(g_1-g_2)$ and $d_p^H(g_1,g_2)$ involve taking the difference of the group elements $g_1$ and $g_2$.  

To see that $t$ is absolutely scalable, pick $n\in \Z$ and $g_1,g_2\in G$.  
If $(g_1-g_2)\in H$, then $n(g_1-g_2)\in H$, and 
\begin{align*}
t(ng_1,ng_2) & = p^{-m}d_p^H(ng_1,ng_2) \\
 & = |n|_pp^{-m}d_p^H(g_1,g_2) \\
 & = |n|_pt(g_1,g_2).  
\end{align*}
If $n(g_1-g_2)\notin H$, then $g_1-g_2\notin H$, and 
\begin{align*}
t(ng_1,ng_2) & = p^{-m}p^{v(n(g_1-g_2))} \\
 & = p^{-m}p^{v(g_1-g_2)-v_p(n)} \\
 & = p^{-m}p^{v(g_1-g_2)}|n|_p \\
 & = |n|_pt(g_1,g_2), 
\end{align*}
where $v_p$ is the $p$-adic valuation on $\Q$.  
If $n(g_1-g_2)\in H$ but $g_1-g_2\notin H$, then $p^{v(g_1-g_2)} \mid n$; say $n = p^{v(g_1-g_2)}n'$, and
\begin{align*}
t(ng_1,ng_2) & = p^{-m}d_p^H(ng_1,ng_2) \\
 & = p^{-m}|n'|_pd_p^H(p^{v(g_1-g_2)}g_1,p^{v(g_1-g_2)}g_2) \\
 & = p^{-m}p^{-v_p(n')}\cdot 1 \\
 & = p^{-m}p^{v(g_1-g_2)}p^{-v(g_1-g_2)-v_p(n')} \\
 & = t(g_1,g_2)p^{-v_p(n)} \\
 & = |n|_pt(g_1,g_2).
\end{align*}

Now let us check that $t$ is dominated by $d_p$.  
Suppose that $g_1,g_2\in G$ and $g_1-g_2\in H$.  
Suppose $g_1-g_2$ is $p^n$-divisible in $G$ for some $n\geq 0$.  
If $n < m$, then $t(g_1,g_2) \leq p^{-m} < p^{-n}$.  
If $n\geq m$, then there exists $g\in G$ such that $g_1-g_2 = p^ng = p^{n-m}p^mg \in p^{n-m}H$, so $g_1-g_2$ is $p^{n-m}$-divisible in $H$, and $t(g_1,g_2) = p^{-m}d_p^H(g_1,g_2) \leq p^{-m}p^{-(n-m)} = p^{-n}$.  
As $n$ was arbitrary with the property that $g_1-g_2$ was $p^n$-divisible in $G$, it follows that $t(g_1,g_2)\leq d_p(g_1,g_2)$ if $g_1-g_2\in H$.  

If $g_1-g_2\notin H$ and $p^n$ divides $g_1-g_2$, then $n < m$ by the hypothesis on $m$, and $p^{m-n}(g_1-g_2)\in H$.  
Thus $v(g_1-g_2)\leq m-n$, so $t(g_1,g_2) = p^{-m}p^{v(g_1-g_2)} \leq p^{-n}$.  
Thus $t(g_1,g_2)\leq d_p(g_1,g_2)$ if $g_1-g_2\notin H$, so $t$ is dominated by $d_p$.  

This completes the proof that $t$ is a $p$-adic sublinear pseudometric.  

Now for any $g\notin H$, the definition of $t$ implies that $p^{-m} < t(g,0)$.  
Choose a group homomorphism $f' : \langle g\rangle \to \Z_p$ with the property that $|f'(g)|_p = t(g,0)$.  
Then $f'$ is $t$-contractive, so by Proposition \ref{PROP:all-states-extend}, it extends to a $t$-contractive $p$-adic functional $f$ on $G$.  
As $f$ is $t$-contractive, $|f(h)|_p\leq t(h,0) \leq p^{-m} < t(g,0) = |f(g)|_p$ for all $h\in H$.  
\end{proof}

\subsection{Spaces of functionals}\label{SUBSEC:state-spaces}  

\begin{defn}\label{DEF:state-space}
Let $p$ be a prime and let $G$ be a torsion-free abelian group.  
Let us use the symbol $G^{*p}$ to denote $Hom(G,\Z_p)$, the set of $p$-adic functionals on $G$.  
\end{defn}

$G^{*p}$ is a torsion-free abelian group, and as it is a subgroup of $\Z_p^G$, it contains no non-zero elements that are divisible by arbitrarily high powers of $p$, so the $p$-adic pseudometric on $G^{*p}$ is in fact a metric.  

In what follows, it will be useful to view $G^{*p}$ as a topological space.  
It turns out that the most useful topology is not the metric topology arising from the $p$-adic metric on $G^{*p}$, but rather the topology that $G^{*p}$ inherits from $\Z_p^{G}$.  

\begin{defn}\label{DEF:relative-top}
Let $p$ be a prime and let $G$ be a torsion-free abelian group.  
Let us refer to the topology that $G^{*p}$ inherits as a subspace of the product space $\Z_p^G$ as the \defemph{relative topology} on $G^{*p}$.  
\end{defn}
$\Z_p^G$ is a product of compact spaces, and hence is compact by Tychonoff's Theorem.  
It is easy to verify that $G^{*p}$ is closed in the relative topology, and hence is compact.  

$\Z_p$ is a principal ideal domain, and $G^{*p}$ is a submodule of the $\Z_p$-module $\Z_p^G$, which is free, so $G^{*p}$ is free as well \cite[Theorem 9.8]{R:algebra}.  
Thus is makes sense to speak of a $\Z_p$-basis and the $\Z_p$-rank of $G^{*p}$.  

The metric topology on $G^{*p}$ is finer than the relative topology in general, and if the $\Z_p$-rank of $G^{*p}$ is finite, then the two topologies agree, but this need not be true if $G^{*p}$ has infinite rank, as the following example illustrates.  

\begin{example}\label{EX:infinite-rank-independence}
Let $p$ be a prime, let $G = \bigoplus_{n\in\N}\Z$ and, for $n\in\N$, let $e_n$ denote the $n$th standard basis element of $G$.  
For $m\in\N$, let $f_m$ denote the $p$-adic functional defined by $f_m(e_n) = \delta_{m,n}$, and let $f_0$ denote the $p$-adic functional defined by $f_0(e_n) = 1$ for all $n\in\N$.  
Then the sequence $f_1,f_1+f_2,f_1+f_2+f_3,\ldots $ converges to $f_0$ in the relative topology, but not in the metric topology.  
\end{example}

Note that any $p$-adic functional on $G$ is entirely determined by its values on a maximal independent subset of $G$, which means that, if $G$ has finite rank, then the $\Z_p$-rank of $G^{*p}$ cannot exceed the rank of $G$---more precisely, it cannot exceed the rank of $G$ minus the rank of $G_p$.  

Proposition \ref{PROP:simple}, below, says that $G$ is $p$-simple but not $p$-divisible if and only if the $\Z_p$-rank of $G^{*p}$ is $1$.  
The proof of this fact uses the following easy lemma.  
\begin{lemma}\label{LEM:kernel-ideal}
Let $p$ be a prime, let $G$ be a torsion-free abelian group, and let $f$ be a $p$-adic functional on $G$.  
Then $\ker (f)$ is a $p$-ideal of $G$.  
\end{lemma}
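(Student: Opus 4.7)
The plan is to verify directly the two defining properties of a $p$-ideal: purity and closedness in the $p$-adic topology on $G$.

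First I would check purity. $\ker(f)$ is obviously a subgroup since $f$ is a homomorphism. For purity, suppose $ng\in\ker(f)$ with $n\in\Z\setminus\{0\}$ and $g\in G$. Then $nf(g)=f(ng)=0$ in $\Z_p$, and since $\Z_p$ is torsion-free, $f(g)=0$, so $g\in\ker(f)$.

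Next I would check closedness with respect to $d_p$. The key ingredient is the opening observation of Section \ref{SUBSEC:functionals}, which shows that every $p$-adic functional is automatically $d_p$-contractive: if $g\in G$ is divisible by $p^n$, then $|f(g)|_p\leq p^{-n}$. In particular $f$ is continuous as a map from $(G,d_p)$ to $(\Z_p,|\cdot|_p)$, and since $\{0\}$ is closed in $\Z_p$, its preimage $\ker(f)$ is closed in the topology on $G$ arising from $d_p$.

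There is no real obstacle here; the only conceptual point worth flagging is that $d_p$ may be merely a pseudometric (if $G_p\neq\{0\}$), but continuity and closedness still make sense in that setting, and in fact $G_p\subseteq\ker(f)$ automatically by the contractivity bound, so the closure of $\ker(f)$ cannot pick up anything outside $\ker(f)$ that the pseudometric would force in. Combining the two properties gives that $\ker(f)$ is a $p$-ideal.
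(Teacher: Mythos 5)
Your proof is correct and follows essentially the same route as the paper's: purity from the torsion-freeness of $\Z_p$, and closedness from the automatic $d_p$-contractivity (hence continuity) of $f$ together with the closedness of $\{0\}$ in $\Z_p$. You simply spell out the details the paper leaves implicit.
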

\begin{proof}
$f$ is a group homomorphism into a torsion-free abelian group, so its kernel is a pure subgroup of $G$.  
$f$ is continuous with respect to the topology arising from the $p$-adic pseudometric on $G$, so its kernel is closed, as it is the pre-image of the closed subset $\{ 0\}$ of the metric space $\Z_p$.  
\end{proof}

\begin{prop}\label{PROP:simple}
Let $p$ be a prime and let $G$ be a torsion-free abelian group.  
Then $G$ is $p$-simple but not $p$-divisible if and only if, up to multiplication by a non-zero constant in $\Z_p$, there is a unique non-zero $p$-adic functional on $G$.  
\end{prop}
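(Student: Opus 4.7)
My plan is to prove each direction directly, pivoting on Lemma \ref{LEM:kernel-ideal} (which ensures the kernel of any $p$-adic functional is a $p$-ideal) together with the existence results Corollary \ref{COR:states-exist}, Corollary \ref{COR:state-hits-distance} and Corollary \ref{COR:separating-state}. The claim one needs is essentially that $G^{*p}$ is a rank-one free $\Z_p$-module, generated by a functional that takes at least one value of $p$-adic absolute value $1$.

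For the forward direction, assume $G$ is $p$-simple and not $p$-divisible. By Corollary \ref{COR:states-exist} pick a non-zero $p$-adic functional $f_0$. The values $|f_0(g)|_p$ lie in the discrete set $\{p^{-k}:k\geq 0\}\cup\{0\}$, so the supremum is attained, say at $g_0$, with $|f_0(g_0)|_p = p^{-k}$; after replacing $f_0$ by $p^{-k}f_0$ (which still maps into $\Z_p$ because the image of $f_0$ lies in $p^k\Z_p$) I may assume $f_0(g_0)\in \Z_p^\times$. For any $f\in G^{*p}$, the combination $h:=f_0(g_0)f - f(g_0)f_0$ is again in $G^{*p}$, vanishes on $g_0$, and has kernel a $p$-ideal by Lemma \ref{LEM:kernel-ideal}. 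Since every $p$-adic functional vanishes on $G_p$, $f_0(g_0)\in\Z_p^\times$ forces $g_0\notin G_p$, so $\ker h\supsetneq G_p$; by $p$-simplicity $\ker h = G$, i.e.\ $h=0$, giving $f = (f(g_0)/f_0(g_0))f_0$ with scalar in $\Z_p$. Thus $G^{*p}=\Z_p f_0$.

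For the reverse direction, assume $G^{*p}=\Z_p f_0$ with $f_0\neq 0$. The contrapositive of the easy observation that every functional vanishes on a $p$-divisible group (since $f(p^n h)=p^n f(h)$ forces $|f(g)|_p\leq p^{-n}$ for all $n$) gives that $G$ is not $p$-divisible. Suppose for contradiction that $J$ is a $p$-ideal with $G_p\subsetneq J\subsetneq G$, and pick $g\in J\setminus G_p$ and $g'\in G\setminus J$. By Corollary \ref{COR:state-hits-distance} there is $f_1\in G^{*p}$ with $|f_1(g)|_p=d_p(g,0)>0$, and by Corollary \ref{COR:separating-state} (applied with $H=J$, which is already closed and pure) there is $f_2\in G^{*p}$ with $f_2|_J=0$ and $f_2(g')\neq 0$. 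Writing $f_1 = af_0$, $f_2 = bf_0$ with $a,b\in\Z_p\setminus\{0\}$, we get $0=f_2(g)=bf_0(g)$, so $f_0(g)=0$, and hence $f_1(g)=af_0(g)=0$, contradicting $f_1(g)\neq 0$.

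The main thing to get right is the normalization in the forward direction: one must produce a $g_0$ on which $f_0$ takes a unit value, otherwise the scalar $f(g_0)/f_0(g_0)$ extracted from $h=0$ may fall outside $\Z_p$. Everything else is a routine application of the $p$-ideal/kernel machinery that has been built up, together with the strong Hahn--Banach-type separation from Corollaries \ref{COR:state-hits-distance} and \ref{COR:separating-state}.
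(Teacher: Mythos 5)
Your proof is correct and follows essentially the same route as the paper: the forward direction hinges on Lemma \ref{LEM:kernel-ideal} applied to the kernel of a suitable $\Z_p$-combination of two functionals (the paper forms $f_2 - \tfrac{f_2(g)}{f_1(g)}f_1$ after a WLOG on absolute values, where you instead normalize $f_0$ to attain a unit value --- a slightly cleaner way to keep the scalar in $\Z_p$), and the reverse direction produces two non-proportional functionals from a proper $p$-ideal $J\supsetneq G_p$ using the separation corollaries. The only cosmetic difference is that the paper gets the functional that is non-zero on $J$ via Corollary \ref{COR:states-exist} plus Proposition \ref{PROP:all-states-extend}, while you get it from Corollary \ref{COR:state-hits-distance} applied to some $g\in J\setminus G_p$; both work.
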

\begin{proof}
For the ``only if'' direction, suppose that $G$ is $p$-simple but not $p$-divisible, then pick $f_1, f_2 \in G^{*p}$, and suppose that $f_1$ is non-zero.  
Then there exists $g\in G$ such that $f_1(g)\neq 0$, which means in particular that $g\notin G_p$.  
Suppose without loss of generality that $|f_1(g)|_p\geq |f_2(g)|_p$.  
Then $f_0 := f_2 - \frac{f_2(g)}{f_1(g)}f_1\in G^{*p}$, and $g\in \ker(f_0)$.  
By Lemma \ref{LEM:kernel-ideal}, $\ker(f_0)$ is a $p$-ideal of $G$.  
$\ker(f_0)$ contains $g$, so is bigger than $G_p$; thus, as $G$ is $p$-simple, $\ker(f_0) = G$.  

Now let us prove the ``if'' direction.  

Note that, if $G$ is $p$-divisible, then it has no non-zero $p$-adic functionals.  
Thus suppose that $G$ is not $p$-divisible and not $p$-simple, and let us construct two $\Z_p$-independent $p$-adic functionals on $G$.  
Let $J$ be a proper $p$-ideal of $G$ that is strictly larger than $G_p$.  
As $J$ is proper, there exists $g\in G\backslash J$; by Corollary \ref{COR:separating-state}, there exists a $p$-adic functional $f_1$ with the property that $f_1(h) = 0$ for all $h\in J$ and $f_1(g)\neq 0$.  

As $J$ is larger than $G_p$, it is not $p$-divisible, so by Corollary \ref{COR:states-exist}, there exists a non-zero $p$-adic functional on $J$, which extends to a $p$-adic functional $f_2$ on $G$ by Proposition \ref{PROP:all-states-extend}.  
$f_1$ and $f_2$ are both non-zero, and neither is a multiple of the other because $f_1$ is $0$ on $J$ while $f_2$ is not.  
\end{proof}

The next lemma characterizes the $p$-adic metric on $G^{*p}$ in terms of the absolute values of the images of elements of $G$.  

\begin{lemma}\label{LEM:state-space-metric}
Let $p$ be a prime, let $G$ be a torsion-free abelian group with $p$-adic pseudometric $d_p$, and let $f$ be a $p$-adic functional on $G$.  
Then 
\[
d_p^{*p}(f,0) = \max \Big( \Big\{ \frac{|f(g)|_p}{d_p(g,0)} : g\in G, d_p(g,0)\neq 0\Big\}\cup \{ 0\}\Big),  
\]
where $d_p^{*p}$ denotes the $p$-adic metric on $G^{*p}$.  
\end{lemma}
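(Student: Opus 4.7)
The plan is to rewrite $d_p^{*p}(f,0)$ as $p^{-m}$, where $m$ is the largest non-negative integer with the property that $f = p^m f'$ for some $f' \in G^{*p}$; such a maximum exists whenever $f \neq 0$, because $G^{*p}$ is a subgroup of $\Z_p^G$ and hence contains no non-zero element divisible by arbitrarily high powers of $p$. If $f = 0$ both sides of the claimed equality are $0$, so I assume $f \neq 0$ and write $f = p^m f'$ with $f' \in G^{*p}$ not itself $p$-divisible in $G^{*p}$.

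For the upper bound, I would fix any $g \in G$ with $d_p(g,0) = p^{-n} \neq 0$. By definition of $d_p$, this gives $h \in G$ with $g = p^n h$, so $f(g) = p^{m+n} f'(h)$ and therefore $|f(g)|_p \leq p^{-(m+n)}$. Dividing gives $|f(g)|_p / d_p(g,0) \leq p^{-m}$, so every element of the set on the right of the claimed equality is bounded above by $d_p^{*p}(f,0)$.

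For the matching lower bound I would establish the little observation that $f'$ fails to be $p$-divisible in $G^{*p}$ if and only if $f'(G) \not\subseteq p\Z_p$: indeed, the only candidate for a $p$-th part of $f'$ is $g \mapsto f'(g)/p$, which is automatically a homomorphism, and it lands in $\Z_p$ precisely when $f'(G) \subseteq p\Z_p$. Hence there exists $g \in G$ with $|f'(g)|_p = 1$, giving $|f(g)|_p = p^{-m}$. Since every $p$-adic functional is $d_p$-contractive, we have $d_p(g,0) \geq |f(g)|_p = p^{-m} > 0$, and since $d_p$ is bounded above by $1$, the quotient $|f(g)|_p/d_p(g,0) \geq p^{-m}$ realises the bound from the previous step (forcing in fact $d_p(g,0) = 1$). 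Combined with the upper bound, this shows the maximum is attained and equals $d_p^{*p}(f,0)$.

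There is no real obstacle here; the only point that requires any care is the small lemma that $p$-divisibility of $f'$ in $G^{*p}$ is equivalent to $f'(G) \subseteq p\Z_p$, and that is immediate once one observes that dividing the values of a homomorphism by a non-zero integer preserves the homomorphism property. Everything else is the $p$-adic bookkeeping used above, so the proof should be short and self-contained.
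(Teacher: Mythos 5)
Your proof is correct and follows essentially the same route as the paper's: both reduce $d_p^{*p}(f,0)$ to the maximal power $p^m$ dividing $f$ in $G^{*p}$, and both characterize that divisibility by whether $g\mapsto f(g)/p^k$ still takes values in $\Z_p$. The only real difference is presentational: you exhibit an explicit witness $g$ with $d_p(g,0)=1$ attaining the maximum (which the paper extracts separately as Corollary \ref{COR:state-max-attained}), whereas the paper argues the two inequalities abstractly.
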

\begin{proof}
Suppose that $f = p^k f'$ for some $f'\in G^{*p}$.  
Then, for any $g\in G$, 
\begin{align*}
|f(g)|_p & = |p^kf'(g)|_p = p^{-k}|f'(g)|_p \leq p^{-k}d_p(g,0).  
\end{align*}
Thus $\frac{|f(g)|_p}{d_p(g,0)} \leq d_p^{*p}(f,0)$ for all $g\in G$ with $d_p(g,0)\neq 0$.  

Now suppose that $|f(g)|_p\leq p^{-k} d_p(g,0)$ for all $g\in G$.  
Then define a function $f': G\to \Q_p$ by $f'(g) = \frac{1}{p^k}f(g)$ for $g\in G$.  
This is clearly a group homomorphism into $\Q_p$, and in fact it takes values in $\Z_p$ because $|f'(g)|_p = |\frac{1}{p^k}f(g)|_p = p^k|f(g)|_p \leq d_p(g,0)$, so in this case, $d_p^{*p}(f,0)\leq p^{-k}$.  
Thus $d_p^{*p}(f,0)\leq \max \{ \frac{|f(g)|_p}{d_p(g,0)} : g\in G, d_p(g,0)\neq 0\}$.  
\end{proof}

The following corollary is an immediate consequence of Lemma \ref{LEM:state-space-metric}.  
\begin{cor}\label{COR:state-max-attained}
Let $p$ be a prime, let $G$ be a torsion-free abelian group with $p$-adic pseudometric $d_p$, and suppose that $f$ is a non-zero $p$-adic functional on $G$.  
Then there exists $g\in G$ with $d_p(g,0) = 1$ such that $d_p^{*p}(f,0) = |f(g)|_p$, where $d_p^{*p}$ denotes the $p$-adic metric on $G^{*p}$.  
\end{cor}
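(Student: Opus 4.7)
The plan is to reduce this statement to Lemma \ref{LEM:state-space-metric} by scaling an extremal element down to $p$-adic length $1$. That lemma expresses $d_p^{*p}(f,0)$ as the maximum of the ratios $|f(g)|_p / d_p(g,0)$ over elements $g$ with $d_p(g,0) \neq 0$; the job is to show that this maximum is achieved by an element of unit length, at which point the ratio reduces to just $|f(g)|_p$.

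First I would invoke Lemma \ref{LEM:state-space-metric} and pick some $g_0 \in G$ realizing the maximum. Since $f$ is non-zero there exists $g$ with $f(g) \neq 0$; contractivity of $f$ then forces $d_p(g,0) \neq 0$, so the set whose maximum defines $d_p^{*p}(f,0)$ contains a positive element, and hence $d_p^{*p}(f,0) = |f(g_0)|_p / d_p(g_0,0) > 0$ with $d_p(g_0,0) = p^{-k}$ for some integer $k \geq 0$. I would briefly note why the max is genuinely attained rather than merely approached: every ratio lies in the discrete set $\{p^{-n} : n \in \Z\}$, and the set is bounded above (by $1$, by contractivity), so any non-empty such set realizes its supremum.

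Next I would unpack the meaning of $d_p(g_0,0) = p^{-k}$ using Definition \ref{DEF:pseudometric}: this says $g_0$ is divisible by $p^k$ but not by $p^{k+1}$, so there is $h \in G$ with $g_0 = p^k h$, and $h$ itself is not $p$-divisible (otherwise $g_0$ would be divisible by $p^{k+1}$, contradicting the maximality of $k$). Hence $d_p(h,0) = 1$. Since $f$ is a homomorphism, $|f(g_0)|_p = p^{-k} |f(h)|_p$, so dividing through by $d_p(g_0,0) = p^{-k}$ yields $d_p^{*p}(f,0) = |f(h)|_p$, which is the required witness. I do not anticipate any serious obstacle: the whole argument is a one-line normalization, with the substantive content residing in Lemma \ref{LEM:state-space-metric}; the only minor point worth spelling out carefully is the discreteness argument guaranteeing that the supremum is an actual maximum.
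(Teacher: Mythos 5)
Your proof is correct and is exactly the argument the paper has in mind: the paper states this corollary as an immediate consequence of Lemma \ref{LEM:state-space-metric}, and your normalization $g_0 = p^k h$ with $h$ not $p$-divisible is precisely the step being left implicit. The discreteness remark about the supremum being attained is a reasonable extra precaution but is already built into the lemma's use of $\max$.
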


The goal of the next sequence of results is to characterize the subsets of $G^{*p}$ that are sufficiently rich to contain, for each $g\in G$, a $p$-adic functional that achieves the maximum possible absolute value on $g$.  
This is the content of Proposition \ref{PROP:spanning-states}, which says that this is occurs for subsets of $G^{*p}$ that have $\Z_p$-spans that are dense in $G^{*p}$ in the relative topology.  
The proof of this result uses Lemma \ref{LEM:independence}, which in turn uses the following technical lemma.  

\begin{lemma}\label{LEM:independence-rank}
Let $p$ be a prime, let $G$ be a torsion-free abelian group, and let $F\subset G^{*p}$ and $X\subset G$ be finite subsets of $G^{*p}$ and $G$ respectively, with the property that $\{ f|_X : f\in F\} \subseteq \Z_p^X$ is $\Z_p$-independent.  
Then there exists $X'\subseteq X$ such that $X'$ and $F$ contain the same number of elements, and $\{ f|_{X'} : f\in F\}\subseteq \Z_p^{X'}$ is $\Z_p$-independent.  
\end{lemma}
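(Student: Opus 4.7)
The plan is to reduce this to a standard linear algebra statement over the field $\Q_p$. Enumerate $F = \{f_1,\ldots, f_k\}$ and $X = \{x_1,\ldots, x_n\}$, and form the $k\times n$ matrix $M\in M_{k\times n}(\Z_p)$ with entries $M_{ij} = f_i(x_j)$. The restrictions $f_i|_X$ are precisely the rows of $M$, so the hypothesis says that the rows of $M$ are $\Z_p$-independent as elements of $\Z_p^X$.

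The first step is the observation that $\Z_p$-independence of a finite family in the torsion-free $\Z_p$-module $\Z_p^X$ is equivalent to $\Q_p$-independence of the same family in the $\Q_p$-vector space $\Q_p^X$: any nontrivial $\Q_p$-relation can be cleared of denominators by multiplying through by a suitable power of $p$ to yield a nontrivial $\Z_p$-relation (and the converse is immediate). So I may regard the rows of $M$ as $\Q_p$-independent.

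Next, I would invoke the standard equality of row rank and column rank for matrices over a field: since the $k$ rows of $M$ are $\Q_p$-independent, $M$ has rank $k$, so there are $k$ columns of $M$ that are $\Q_p$-independent. Let $X'\subseteq X$ be the subset of indices of such columns, so $|X'| = k = |F|$. The $k\times k$ submatrix $M'$ formed by these columns has column rank $k$, hence is invertible over $\Q_p$, and therefore its rows are $\Q_p$-independent. But the rows of $M'$ are exactly $f_1|_{X'},\ldots, f_k|_{X'}$, so $\{f|_{X'} : f\in F\}$ is $\Q_p$-independent in $\Q_p^{X'}$, and applying the first step in reverse, $\Z_p$-independent in $\Z_p^{X'}$, as required.

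There is no substantive obstacle here; the content of the lemma is entirely linear-algebraic. The only point requiring any care is the equivalence of $\Z_p$-independence and $\Q_p$-independence, which relies on $\Z_p^X$ being torsion-free so that the localization map $\Z_p^X \hookrightarrow \Q_p^X$ is injective.
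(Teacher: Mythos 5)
Your proof is correct, but it takes a genuinely more direct route than the paper's. You reduce the whole statement to linear algebra over $\Q_p$ in one shot: the equivalence of $\Z_p$-independence and $\Q_p$-independence (by clearing denominators with a power of $p$, which is valid since $\Q_p = \Z_p[1/p]$ and $\Z_p^X$ is torsion-free), followed by the equality of row rank and column rank to select all $k$ columns of $X'$ simultaneously, and invertibility of the resulting $k\times k$ minor to transfer independence back to the rows. The paper instead argues by downward induction on $|X|$: when $l > k$ it uses the Rank--Nullity Theorem over $\Q_p$ to produce a null vector $v$ of the $k\times l$ matrix, normalizes $v$ so that its entry of maximal absolute value equals $1$, and then shows---via an $\epsilon$-approximation of $v$ by an integer vector and the strong triangle inequality---that any $\Z_p$-relation among the restrictions to $X$ minus the corresponding column extends to a relation on all of $X$, so that one column can be deleted while preserving independence; iterating reaches $|X'| = k$. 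Both arguments rest on the same field-theoretic rank facts for $\Q_p$; yours buys brevity and avoids both the induction and the approximation-by-integers step, while the paper's is more hands-on and stays closer to explicit $p$-adic estimates. The only point worth making explicit in your write-up is the one you already flag: that scaling a nontrivial $\Q_p$-relation by a power of $p$ yields a nontrivial $\Z_p$-relation, so no information is lost in passing to the field.
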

\begin{proof}
Let $F = \{ f_1,\ldots,f_k\}$ and $X = \{ g_1, \ldots, g_l\}$.  
Certainly $l\geq k$, as $\{ f_i|_X : 1\leq i\leq k\}$ is independent.  

Consider the matrix $B\in M_{k\times l}(\Z_p)$, the $(i,j)$th entry of which is $f_i(g_j)$.  
View $B$ as a matrix over $\Q_p$; then if $l>k$ the Rank-Nullity Theorem says that there exists a non-zero vector $v = (v_1,\ldots, v_l)^t \in\Q_p^l$ such that $Bv = 0$.  
Suppose that the entry $v_l$ has maximal absolute value among all entries of $v$; then, by replacing $v$ with $v/v_l$, we may suppose that $v$ in fact has entries in $\Z_p$, and $v_l = 1$.  

Then $\{ f|_{X\backslash \{ g_l\}} : f\in F\}$ is $\Z_p$-independent.  
To see this, suppose that $\alpha_1f_1+\cdots + \alpha_kf_k$ is $0$ on $X\backslash \{ g_l\}$, where $\alpha_i\in\Z_p$ for $1\leq i\leq k$.  
Let us pick $\epsilon>0$ and show that $|\alpha_1f_1(g_l)+\cdots +\alpha_kf_k(g_l)|_p<\epsilon$.  

$\Z$ is dense in $\Z_p$, so we can pick an element $w = (w_1,\ldots,w_l)^t\in\Z^l$ such that $|v_j-w_j|_p<\epsilon$ for all $1\leq j\leq l$.  
We may insist further that $w_l=1$ because $v_l=1$.  
Then $|w_1f_i(g_l)+\cdots + w_{l-1}f_i(g_{l-1})+f_i(g_l)|_p<\epsilon$.  

Thus it is true that, for each $i\leq k$, 
\begin{align*}
f_i(g_l) & = \big( f_i(g_l) + \sum_{j=1}^{l-1}w_jf_i(g_j)\big) - \sum_{j=1}^{l-1}w_jf_i(g_j),\\
\intertext{which implies}
\sum_{i=1}^k \alpha_if_i(g_l) & = \sum_{i=1}^k\alpha_i \Big[ \big( f_i(g_l) + \sum_{j=1}^{l-1}w_jf_i(g_j)\big) - \sum_{j=1}^{l-1}w_jf_i(g_j) \Big] \\
 & = \sum_{i=1}^k\alpha_i \big( f_i(g_l) + \sum_{j=1}^{l-1}w_jf_i(g_j)\big) - \sum_{j=1}^{l-1}w_j \sum_{i=1}^k \alpha_i f_i(g_j) \\
 & = \sum_{i=1}^k\alpha_i \big( f_i(g_l) + \sum_{j=1}^{l-1}w_jf_i(g_j)\big) - \sum_{j=1}^{l-1}w_j  \cdot 0.
\end{align*}
Each term in the first sum has absolute value less than $|\alpha_i|\epsilon \leq  \epsilon$, so by the strong triangle inequality, the entire sum has absolute value less than $\epsilon$.  

Thus $\sum_{i=1}^k\alpha_if_i(g_l) = 0$, implying $\{ f|_X : f\in F\}$ is $\Z_p$-dependent, which is a contradiction.  
Thus $\{ f_{X\backslash \{ g_l\}} : f\in F\}$ is $\Z_p$-independent.  
We can apply the same argument to $X\backslash \{ g_l\}$ to show that either $X\backslash \{ g_l\}$ has the same number of elements as $F$, or else there exists a smaller set, the restriction of $F$ to which is $\Z_p$-independent.  
Proceeding in this fashion eventually produces a set with the same size as $F$.  
\end{proof}

The next lemma says that a functional $f\in G^{*p}$ can be separated from a subset $F\subseteq G^{*p}$ by an element of $G$ if and only if it is not in the relative-topology closure of the span of $F$.  

\begin{lemma}\label{LEM:independence}
Let $p$ be a prime, let $G$ be a torsion-free abelian group, let $F$ be a non-empty subset of $G^{*p}$, and let $f\in G^{*p}$.  
The following are equivalent:  
\begin{enumerate}
\item  There exists $g\in G$ such that $|f(g)|_p>|f'(g)|_p$ for all $f'\in F$.  
\item  $f$ is not in the relative-topology closure of the $\Z_p$-span of $F$.  
\end{enumerate}
\end{lemma}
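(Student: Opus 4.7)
The plan is to prove $(1)\Rightarrow(2)$ by a direct strong-triangle-inequality argument, and $(2)\Rightarrow(1)$ by a Smith-Normal-Form analysis of the restriction map $G^{*p}\to\Z_p^X$ for a suitable finite $X\subseteq G$, followed by a $\Z$-approximation of $\Z_p$-coefficients in order to pull the resulting separating $\Z_p$-functional back to an honest element of $G$.

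For the easy direction $(1)\Rightarrow(2)$, I would take $g\in G$ with $|f(g)|_p>|f'(g)|_p$ for every $f'\in F$ (so in particular $f(g)\neq 0$, as $F$ is non-empty) and consider the basic relative-topology open neighborhood $U=\{h\in G^{*p}:|h(g)-f(g)|_p<|f(g)|_p\}$ of $f$. For any finite $\Z_p$-combination $h=\sum_{j=1}^k\alpha_jf'_j$ with $f'_j\in F$ and $\alpha_j\in\Z_p$, the strong triangle inequality gives $|h(g)|_p\leq\max_j|f'_j(g)|_p<|f(g)|_p$, whence $|h(g)-f(g)|_p=|f(g)|_p$ and $h\notin U$. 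Thus $U$ is a neighborhood of $f$ disjoint from the $\Z_p$-span of $F$, so $f$ is not in its relative-topology closure.

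For $(2)\Rightarrow(1)$ I would begin by unpacking the hypothesis $f\notin\overline{\spa_{\Z_p}(F)}$ to extract a finite subset $X=\{g_1,\ldots,g_n\}\subseteq G$ and a $\delta>0$ such that $\|\Phi_X(h)-\Phi_X(f)\|_p\geq\delta$ for every $h\in\spa_{\Z_p}(F)$, where $\Phi_X:G^{*p}\to\Z_p^n$ is the evaluation $h\mapsto(h(g_1),\ldots,h(g_n))$. Setting $V:=\Phi_X(\spa_{\Z_p}(F))\subseteq\Z_p^n$, this says $d(\Phi_X(f),V)\geq\delta$ in sup norm. Since $\Z_p$ is a Noetherian PID and $V\subseteq\Z_p^n$, $V$ is finitely generated and free, and Smith Normal Form yields a $\Z_p$-basis $e_1,\ldots,e_n$ of $\Z_p^n$ together with non-negative integers $m_1,\ldots,m_r$ (where $r=\rank V$) such that $p^{m_1}e_1,\ldots,p^{m_r}e_r$ is a $\Z_p$-basis of $V$. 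Expanding $\Phi_X(f)=\sum_ic_ie_i$, the positivity of $d(\Phi_X(f),V)$ forces at least one index $j$ with $|c_j|_p>p^{-m_j}$ (using the convention $p^{-m_j}=0$ for $j>r$), so that the coordinate functional $\phi:\Z_p^n\to\Z_p$ dual to $e_j$ satisfies $|\phi(\Phi_X(f))|_p=|c_j|_p$ while $|\phi(w)|_p\leq p^{-m_j}$ for every $w\in V$, and in particular for every $w=\Phi_X(f')$ with $f'\in F$.

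It remains to produce $g\in G$ from $\phi$. Writing $\phi$ in the standard basis as $\phi(v)=\sum_ib_iv_i$ with $b_i\in\Z_p$, so that $\phi(\Phi_X(h))=\sum_ib_ih(g_i)$ for all $h\in G^{*p}$, I would use density of $\Z$ in $\Z_p$ to choose $a_i\in\Z$ with $|a_i-b_i|_p<\eta$ for some fixed $\eta\in(0,|c_j|_p)$, and set $g:=\sum_ia_ig_i\in G$. Since $|h(g_i)|_p\leq 1$, the strong triangle inequality yields $|h(g)-\phi(\Phi_X(h))|_p<\eta$ uniformly in $h\in G^{*p}$; applied with $h=f$ this gives $|f(g)|_p=|c_j|_p$, and applied with $h=f'\in F$ it gives $|f'(g)|_p\leq\max(p^{-m_j},\eta)<|c_j|_p=|f(g)|_p$, precisely the desired separating element. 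The main obstacle in this plan is exactly this final step: passing from a $\Z_p$-linear separating functional $\phi$ back to an integer combination of the $g_i$ while controlling the estimate \emph{uniformly} over the possibly infinite set $F$; the density approximation handles the first issue and the fact that every $\Phi_X(f')$ lies in the finitely-generated submodule $V$ handles the second.
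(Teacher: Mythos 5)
Your proposal is correct, and while the $(1)\Rightarrow(2)$ direction is essentially the paper's argument (the paper phrases the separating neighbourhood as a product open set and first argues that $\sup_{f'\in F}|f'(g)|_p$ is attained, whereas you only ever estimate finite $\Z_p$-combinations, which is slightly cleaner), the $(2)\Rightarrow(1)$ direction follows a genuinely different route. Both proofs reduce to a finite subset $X\subseteq G$ on which $f|_X$ falls outside the (automatically closed) $\Z_p$-span of $F|_X$ in $\Z_p^X$, and both finish with the same device of approximating $\Z_p$-coefficients by integers to turn a separating $\Z_p$-linear functional on $\Z_p^X$ into an honest element of $G$. But the paper trims $X$ to a square situation via Lemma \ref{LEM:independence-rank}, forms the matrix $A=(f_i(g_j))$ with $f_1=f|_X$ and $f_2,\ldots,f_k$ a basis of $\spa_{\Z_p}(F|_X)$, and applies the adjugate to solve $Av=\det(A)e_1$; you instead apply the elementary-divisor (Smith Normal Form) theorem to the submodule $V=\Phi_X(\spa_{\Z_p}F)\subseteq\Z_p^n$ and read off a coordinate functional dual to an adapted basis. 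Your route buys two things: it dispenses with Lemma \ref{LEM:independence-rank} altogether, and it transparently covers the case where $f|_X$ lies in the $\Q_p$-span of $F|_X$ but not in its $\Z_p$-span (e.g.\ $F=\{pf\}$ with $f$ primitive), a case in which the paper's assertion that $\{f_1,\ldots,f_k\}$ is $\Z_p$-independent---and hence that $\det A\neq 0$---requires additional justification. The paper's route, in exchange, is more explicitly computational and produces the separating element by a concrete matrix formula.
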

\begin{proof}
To prove that (1) implies (2), suppose for a contradiction that there exists $g\in G$ satisfying condition (1), but $f$ lies in the relative-topology closure of the $\Z_p$-span of $F$.  
Let $M = \sup_{f'\in F} \{ |f'(g)|_p\}$.  
Then, as $\Z_p$ is a discrete valuation ring, this supremum is achieved on $F$; that is, $M = \max_{f'\in F} \{|f'(g)|_p\} < $ $|f(g)|_p$.  
Similarly, the fact that $\Z_p$ is a discrete valuation ring implies that the set $T_{|f(g)|_p} := \{ \beta\in\Z_p : |\beta |_p = |f(g)|_p\}$ is open in $\Z_p$.  

Consider the relative-topology open set $\prod_{g'\in G}U_{g'}$, where $U_{g'} = \Z_p$ if $g'\neq g$ and $U_g = T_{|f(g)|_p}$.  
$f$ is in this open set, so, by assumption, there exists some $\Z_p$ combination $\alpha_1f_1 + \cdots + \alpha_nf_n$ in this set, where $\alpha_i\in\Z_p$ and $f_1,\ldots , f_n\in F$.  
But then, by the strong triangle inequality, $|f(g)|_p = $ $|\alpha_1f_1(g) + \cdots + \alpha_nf_n(g)|_p \leq $ $\max_{1\leq i \leq n} \{ |\alpha_i|_p|f_i(g)|_p\} \leq M < |f(g)|_p$, which is a contradiction.  
Thus $f$ is not in the relative-topology closure of the $\Z_p$-span of $F$ after all.  

Now let us prove that (2) implies (1).  
The statement that $f$ is not in the relative-topology closure of the $\Z_p$-span of $F$ means that there exists a finite subset $X\subset G$ such that $f|_X$ is not in the $\Z_p$-span of $\{ f'|_X : f'\in F\}$.  
Let $f_1 = f|_X$, and let $\{ f_2,\ldots, f_k\}$ be a basis for the $\Z_p$-module spanned by $\{ f'|_X : f'\in F\}$, which is a submodule of the free $\Z_p$-module $\Z_p^X$, and hence is free itself.  

Then $\{ f_1,\ldots,f_k\}$ is independent over $\Z_p$.  
By Lemma \ref{LEM:independence-rank},  we may, in fact, suppose that $X$ contains exactly $k$ elements, say $X = \{ g_1,\ldots, g_k\}$.  

Let $A$ denote the $k\times k$ matrix, the $(i,j)$th entry of which is $f_i(g_j)\in\Z_p$.  
$\{ f_1|_X,\ldots , f_k|_X\}$ is $\Z_p$-independent, so $A$ has non-zero determinant.  

Let $\adj(A)$ denote the adjugate of $A$, that is, the transpose of the matrix of cofactors of $A$.  
Then the entries of $\adj(A)$ are in $\Z_p$, and $A\adj(A) = \det(A)I$, where $I$ is the $k\times k$ identity matrix.  

Let $e_1$ denote the first standard basis element of $\Z_p^k$.  
Let $v = (v_1,\ldots,v_k)^t =$ $\adj(A)e_1\in\Z_p^k$, so that $Av = \det(A)e_1$.  
$\Z$ is dense in $\Z_p$, so there exists $w = (w_1,\ldots,w_k)^t $ $\in\Z^k$ such that $|v_i-w_i|_p < \det(A)/2$ for each index $1\leq i \leq k$.  
Then, by the strong triangle inequality, each entry of $Aw$ differs from the corresponding entry of $Av$ by something with absolute value less than $\max_{1\leq i,j\leq k} \{ |f_i(g_j)|_p\} \det(A)/2 \leq \det(A)/2$.  
In particular, the first entry of $Aw$ has absolute value greater than $\det(A)/2$, and all other entries have absolute value less than this.  

Thus $|f_1(w_1g_1+\cdots + w_kg_k)|_p > $ $\det(A)/2 > $ $|f_i(w_1g_1+\cdots + w_kg_k)|_p$ for all $2\leq i \leq k$.  
$\{ f_2,\ldots, f_k\}$ is a basis for the $\Z_p$-module spanned by $\{ f'|_X : f'\in F\}$, so, given any $f'\in F$, there exist $\beta_2,\ldots,\beta_k\in\Z_p$ such that $f'(w_1g_1+\cdots + w_kg_k) = $ $\sum_{j=2}^k \beta_j f_j(w_1g_1+\cdots + w_kg_k)$.  
But by the strong triangle inequality, the absolute value of this sum is at most $\max_{2\leq j \leq k} \{ |\beta_j|_p|f_j(w_1g_1+\cdots + w_kg_k)|_p\} \leq $ $\det(A)/2 < $ $|f(w_1g_1+\cdots + w_kg_k)|_p$.  
Thus statement (1) is true for the element $w_1g_1+\cdots w_kg_k\in G$.  
\end{proof}

The next result describes the subsets of $G^{*p}$, the relative-topology closures of the the spans of which are all of $G^{*p}$.  

\begin{prop}\label{PROP:spanning-states}
Let $p$ be a prime, let $G$ be a torsion-free abelian group with $p$-adic pseudometric $d_p$, and let $F$ be a subset of $G^{*p}$.  
The following are equivalent:  
\begin{enumerate}
\item  For each $g\in G$, there exists $f\in F$ such that $|f(g)|_p = d_p(g,0)$.  
\item  $G^{*p}$ is the relative-topology closure of the $\Z_p$-span of $F$.  
\end{enumerate}
\end{prop}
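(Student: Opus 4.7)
The plan is to derive both implications from Lemma \ref{LEM:independence} combined with Corollary \ref{COR:state-hits-distance}, together with the crucial observation that the $p$-adic absolute value on $\Z_p$ takes only the discrete set of values $\{ p^{-n} : n\geq 0\}\cup \{ 0\}$.

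For the direction (1) $\Rightarrow$ (2), I would argue by contradiction. Suppose (1) holds but the relative-topology closure of the $\Z_p$-span of $F$ is a proper subset of $G^{*p}$. Pick $f_0\in G^{*p}$ outside this closure. By Lemma \ref{LEM:independence} applied with $f = f_0$, there exists $g\in G$ with $|f_0(g)|_p > |f'(g)|_p$ for every $f'\in F$. Because $f_0$ is $d_p$-contractive, $|f_0(g)|_p\leq d_p(g,0)$. Now invoke (1): there is some $f\in F$ realizing $|f(g)|_p = d_p(g,0)\geq |f_0(g)|_p$, which contradicts $|f_0(g)|_p > |f(g)|_p$.

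For the direction (2) $\Rightarrow$ (1), pick $g\in G$. If $d_p(g,0) = 0$ then any $f\in F$ automatically satisfies $|f(g)|_p = 0 = d_p(g,0)$, so assume $d_p(g,0) = p^{-k} > 0$. By Corollary \ref{COR:state-hits-distance} there is some $f_0\in G^{*p}$ with $|f_0(g)|_p = p^{-k}$. Suppose for contradiction that $|f(g)|_p < p^{-k}$ for every $f\in F$. Since $|f(g)|_p$ takes values in $\{ p^{-n}\}\cup \{0\}$, this strict inequality forces $|f(g)|_p\leq p^{-k-1}$ for all $f\in F$. Hence $|f_0(g)|_p > |f(g)|_p$ for every $f\in F$, and Lemma \ref{LEM:independence} concludes that $f_0$ lies outside the relative-topology closure of the $\Z_p$-span of $F$, contradicting (2). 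Thus some $f\in F$ must satisfy $|f(g)|_p = p^{-k} = d_p(g,0)$.

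There is no substantial obstacle: both Corollary \ref{COR:state-hits-distance} and Lemma \ref{LEM:independence} are already available and do all the heavy lifting. The only point that requires a moment of attention is the discreteness step used in the (2) $\Rightarrow$ (1) direction, which upgrades a strict inequality $|f(g)|_p < d_p(g,0)$ holding pointwise on $F$ to a uniform gap $|f(g)|_p\leq p\cdot p^{-1} d_p(g,0)$; this is exactly what is needed to set up the hypothesis of Lemma \ref{LEM:independence}.
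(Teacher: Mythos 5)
Your proof is correct and follows essentially the same route as the paper: the $(1)\Rightarrow(2)$ direction is identical, and for $(2)\Rightarrow(1)$ you invoke the easy implication of Lemma \ref{LEM:independence} contrapositively, where the paper instead writes $f(g)$ as an exact finite $\Z_p$-combination of the values $f_i(g)$ and applies the strong triangle inequality directly --- a cosmetic difference. One small remark: the uniform gap $|f(g)|_p\leq p^{-k-1}$ that you extract from discreteness is not actually needed to apply Lemma \ref{LEM:independence}, whose hypothesis is only the pointwise strict inequality; the lemma's own proof performs that discreteness upgrade internally.
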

\begin{proof}
To prove that (2) implies (1), suppose that $G^{*p}$ is the relative-topology closure of the $Z_p$-span of $F$, and pick $g\in G$.  
By Corollary \ref{COR:state-hits-distance}, there exists a $p$-adic functional $f$ on $G$ such that $|f(g)|_p = d_p(g,0)$.  
We can write $f(g)$ in terms of $\{ f'(g) : f'\in F\}$:  
\begin{align*}
f(g) & = \alpha_1f_1(g) + \cdots + \alpha_kf_k(g),
\end{align*}
where $\alpha_i\in\Z_p$ and $(f_i)_{i=1}^k$ are elements of $F$.  
For each $i\leq k$, $|f_i(g)|_p \leq d_p(g,0)$.  
If the stronger statement $|f_i(g)|_p < d_p(g,0)$ were true for all $i\leq k$, then, by the strong triangle inequality, it would follow that 
\begin{align*}
|f(g)|_p & \leq \max_{i\leq k} \{ |\alpha_i|_p|f_i(g)|_p\} < d_p(g,0),
\end{align*}
which is a contradiction.  
Thus $|f_i(g)|_p = d_p(g,0)$ for at least one $f_i\in F$.  

To prove that (1) implies (2), suppose that statement (1) is true, but the relative-topology closure of the $\Z_p$-span of $F$ is properly contained in $G^{*p}$.  
Then pick a $p$-adic functional $f$ on $G$ that is not in the relative-topology closure of the $\Z_p$-span of $F$.  
By Lemma \ref{LEM:independence}, there exists $g\in G$ such that $|f(g)|_p > $ $|f'(g)|_p$ for all $f'\in F$.  
But this is impossible because, by statement (1), there exists $f'\in F$ such that $|f'(g)|_p = d_p(g,0)$, and $|f(g)|_p$ cannot exceed this value because $f$ is a $p$-adic functional on $G$.  
\end{proof}

\subsection{Double duals}\label{SUBSEC:double-duals}

If $p$ is a prime and $G$ is a torsion-free abelian group, then the dual space $G^{*p}$ of $p$-adic functionals is also a torsion-free abelian group, and so has its own dual space $G^{*p*p}$.  
There is a natural homomorphism $\Phi^p$ from $G$ to $G^{*p*p}$ (Definition \ref{DEF:phi-p}, below), the image of which is dense in $G^{*p*p}$ (Theorem \ref{THM:density}, below).  

\begin{defn}\label{DEF:g-hat}
Let $p$ be a prime, let $G$ be a torsion-free abelian group, and let $g\in G$.  
Define a function $\hat{g}^p : G^{*p}\to \Z_p$ by $\hat{g}^p(f) = f(g)$ for $f\in G^{*p}$.  
\end{defn}
$\hat{g}^p$ is clearly a group homomorphism into $\Z_p$, so is an element of $G^{*p*p}$.  

\begin{defn}\label{DEF:phi-p}
Let $p$ be a prime and let $G$ be a torsion-free abelian group.  
Define a function $\Phi^p$ by 
\begin{align*}
\Phi^p : G & \to G^{*p*p} \\
g & \mapsto \hat{g}^p.
\end{align*}
\end{defn}
Note that $\Phi^p$ is a group homomorphism: $\widehat{g+h}^p(f) = f(g+h) = f(g) + f(h) = \hat{g}^p(f) + \hat{h}^p(f)$.  
Its kernel certainly contains $G_p$, and, by Corollary \ref{COR:state-hits-distance}, $G_p$ is the entire kernel.  
Let $\hat{G}^p$ denote the image of $\Phi^p$: $\hat{G}^p := \{ \hat{g}^p : g\in G\}\subset G^{*p*p}$.  

\begin{remark}\label{REM:not-tensor-product}
By tensoring with $\Z_p$ over $\Z$, we can embed any torsion-free abelian group $G$ into a free $\Z_p$-module.  
Such embeddings have been studied in \cite{M:tensor-product} and \cite{K:matrices}; they are similar to the homomorphism $\Phi^p : G \to G^{*p*p}$, but are not generally the same.  
If $G_p$ is not trivial, then $\Phi^p$ is not an embedding, and even when it is an embedding, it is not necessarily equivalent to the tensor product embedding.  
Example \ref{EX:favourite} illustrates this fact: in that example, $G$ is a rank-$2$ group, so its tensor product with $\Z_3$ has $\Z_3$-rank equal to $2$, while its $p$-adic dual group $G^{*p}$ is generated over $\Z_3$ by a single row, and so $G^{*p*p}$ has rank equal to $1$.  
%
\end{remark}

The main result of this section is the following.  

\begin{thm}\label{THM:density}
Let $p$ be a prime and let $G$ be a torsion-free abelian group with $p$-adic pseudometric $d_p$.  
Then $\Phi^p : G\to\hat{G}^p\subseteq G^{*p*p}$ is a pseudo-isometry, and $d_p(g,0) = d_p^{*p*p}(\hat{g}^p,0) = \max_{f\in G^{*p}}\{ |\hat{g}^p(f)|_p\}$, where $d_p^{*p*p}$ denotes the $p$-adic metric on $G^{*p*p}$.  
Furthermore $\hat{G}^p$ is dense in $G^{*p*p}$ in the relative topology.  
\end{thm}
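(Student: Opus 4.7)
For the two metric equalities $d_p(g,0) = d_p^{*p*p}(\hat{g}^p,0) = \max_{f \in G^{*p}} |\hat{g}^p(f)|_p$, I would argue as follows. Corollary~\ref{COR:state-hits-distance} produces $f_0 \in G^{*p}$ with $|f_0(g)|_p = d_p(g,0)$, while any $p$-adic functional is contractive for $d_p$, giving $|\hat{g}^p(f)|_p = |f(g)|_p \leq d_p(g,0)$ for every $f \in G^{*p}$. Hence the maximum in the middle expression equals $d_p(g,0)$. Since $\hat{g}^p$ is itself a $p$-adic functional on $G^{*p}$, Corollary~\ref{COR:state-max-attained} applied to $\hat{g}^p$ expresses $d_p^{*p*p}(\hat{g}^p,0)$ as the maximum of $|\hat{g}^p(f)|_p$ over those $f \in G^{*p}$ with $d_p^{*p}(f,0) = 1$. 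Using that $G^{*p}$ is a free $\Z_p$-module---so every nonzero $f$ may be written $f = p^k f'$ with $d_p^{*p}(f',0) = 1$, whence $|\hat{g}^p(f)|_p \leq |\hat{g}^p(f')|_p$---the unrestricted and the normalized maxima coincide, yielding $d_p^{*p*p}(\hat{g}^p,0) = d_p(g,0)$. Translation-invariance of both pseudometrics, together with the homomorphism property of $\Phi^p$, then upgrades this to $d_p(g,h) = d_p^{*p*p}(\hat{g}^p,\hat{h}^p)$, showing that $\Phi^p$ is a pseudo-isometry.

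For the density statement, I would reduce to the following concrete approximation problem: given $F \in G^{*p*p}$, functionals $f_1,\ldots,f_n \in G^{*p}$, and $k \in \N$, produce $g \in G$ such that $f_i(g) \equiv F(f_i) \pmod{p^k}$ for every $i$. Let $\bar{M} \subseteq (\Z/p^k\Z)^n$ denote the image of the map $g \mapsto (f_1(g),\ldots,f_n(g)) \bmod p^k$; the goal becomes showing that $(F(f_i) \bmod p^k)_{i=1}^n$ lies in $\bar{M}$.

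The key observation is that $\bar{M}$ is a $\Z/p^k\Z$-submodule of $(\Z/p^k\Z)^n$, and since the quotient $(\Z/p^k\Z)^n / \bar{M}$ is a finite abelian $p$-group annihilated by $p^k$ and hence embeds in some $(\Z/p^k\Z)^m$, the submodule $\bar{M}$ is cut out by a finite system of $\Z/p^k\Z$-linear equations $\sum_j a_{ij} \bar{x}_j = 0$. Lifting the $a_{ij}$ to $\tilde{a}_{ij} \in \Z_p$, the condition that $(f_1(g),\ldots,f_n(g)) \bmod p^k \in \bar{M}$ for every $g \in G$ translates to the statement that the functional $\sum_j \tilde{a}_{ij} f_j \in G^{*p}$ takes all its values in $p^k \Z_p$; dividing by $p^k$ produces $f'_i \in G^{*p}$ with $\sum_j \tilde{a}_{ij} f_j = p^k f'_i$. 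Applying $F$ then gives $\sum_j \tilde{a}_{ij} F(f_j) = p^k F(f'_i) \in p^k \Z_p$, so $(F(f_j) \bmod p^k)_j$ satisfies every equation defining $\bar{M}$ and therefore lies in $\bar{M}$, which furnishes the desired $g$. The main obstacle I anticipate is precisely this last step---extracting the $\Z/p^k\Z$-linear relations that cut out $\bar{M}$ and promoting them to honest $\Z_p$-linear relations among the $f_j$ in $G^{*p}$, since it is only via such relations in $G^{*p}$ that $F$ is forced to respect the constraints defining $\bar{M}$.
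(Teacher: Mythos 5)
Your proof is correct, and while the metric identities are handled essentially as in the paper (Corollary \ref{COR:state-hits-distance} for attainment, Corollary \ref{COR:state-max-attained} plus the normalization $f = p^kf'$ to identify $d_p^{*p*p}(\hat{g}^p,0)$ with the unrestricted maximum), your density argument takes a genuinely different route. The paper first applies Proposition \ref{PROP:spanning-states} to the subset $\hat{G}^p\subseteq G^{*p*p}$ --- using Corollary \ref{COR:state-max-attained} to verify its hypothesis --- to conclude that the $\Z_p$-\emph{span} of $\hat{G}^p$ is dense, and then removes the $\Z_p$-coefficients by approximating them with integers; this leans on the linear-algebraic machinery of Lemmas \ref{LEM:independence-rank} and \ref{LEM:independence} (adjugates, determinants). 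You instead work directly modulo $p^k$: you observe that the image $\bar{M}$ of $G$ in $(\Z/p^k\Z)^n$ is cut out by finitely many linear relations, lift each relation to an identity $\sum_j \tilde{a}_{ij}f_j = p^kf_i'$ in $G^{*p}$, and deduce that $F$ must satisfy the same congruences. This is more elementary and self-contained, and it isolates the real mechanism (an element of the double dual can only be constrained by relations that already hold in $G^{*p}$). The one step you leave implicit is that $F(\sum_j \tilde{a}_{ij}f_j) = \sum_j \tilde{a}_{ij}F(f_j)$ for $\tilde{a}_{ij}\in\Z_p$, i.e.\ that $F$ is $\Z_p$-linear and not merely additive; this is true but deserves a sentence --- $F$, like any group homomorphism into $\Z_p$, is contractive for the $p$-adic metric on $G^{*p}$ (as noted at the start of Section \ref{SUBSEC:functionals}), so approximating $\tilde{a}_{ij}$ by integers and passing to the limit gives $\Z_p$-linearity. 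With that sentence added, your argument is complete.
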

\begin{proof}
Pick $g\in G$, and let us show that $d_p^{*p*p}(\hat{g}^p,0) = d_p(g,0) = \max_{f\in G^{*p}} \{ |\hat{g}^p(f)|_p\}$.  
Certainly for any $f\in G^{*p}$, it is true that $|f(g)|_p\leq d_p(g,0)$, so it is enough to show that this value is attained for some $p$-adic functional $f$.  

By Corollary \ref{COR:state-max-attained} applied to $\hat{g}^p$, there exists $f\in G^{*p}$ with $d_p^{*p}(f,0) = 1$ such that 
\[
d_p^{*p*p}(\hat{g}^p,0) = |\hat{g}^p(f)|_p = |f(g)|_p \leq d_p(g,0) = d_p(g,0)d_p^{*p}(f,0) \leq |f(g)|_p,
\]
where the last inequality follows from Lemma \ref{LEM:state-space-metric}.  
Thus $d_p^{*p*p}(\hat{g}^p,0) = d_p(g,0) = |\hat{g}^p(f)|_p = |f(g)|_p$.  

Now let us show that $\hat{G}^p$ is dense in $G^{*p*p}$ in the relative topology.  
Pick $f\in G^{*p}$.  
By Corollary \ref{COR:state-max-attained}, there exists $g\in G$ with $d_p^{*p}(f,0) = |f(g)|_p = |\hat{g}^p(f)|_p$; by Proposition \ref{PROP:spanning-states}, this implies that the $\Z_p$-span of $\hat{G}^p$ is dense in $G^{*p*p}$ in the relative topology.  
So given $\phi\in G^{*p*p}$, a finite subset $F = \{ f_1,\ldots,f_k\}\subseteq G^{*p}$, and $\epsilon > 0$, there exist $g_1,\ldots, g_l\in G$ and $\alpha_1,\ldots, \alpha_l\in \Z_p$ such that 
\begin{align*}
|(\sum\alpha_j\hat{g}^p_j)(f_i) - \phi(f_i)|_p < \epsilon
\end{align*}
for each $1\leq i\leq k$.  

For $1\leq j\leq l$ choose $a_j\in \Z$ such that $|a_j-\alpha_j|_p < \epsilon$.  
Then $\sum a_j\hat{g}^p_j\in \hat{G}^p$ because $\hat{G}^p$ is a group, and 
\begin{align*}
&   |(\sum a_j\hat{g}^p_j )(f_i) - \phi(f_i)|_p \\
 \leq & \max\{ |(\sum \alpha_j\hat{g}^p_j)(f_i)-\phi (f_i)|_p, |(\sum a_j\hat{g}^p_j)(f_i) - (\sum \alpha_j\hat{g}^p_j )(f_i)|_p\} \\
 \leq & \max \{ \epsilon, |(a_1-\alpha_1)\hat{g}^p_1(f_i)|_p,\ldots, |(a_l-\alpha_l)\hat{g}^p_l(f_i)|_p\} \\
\leq & \max \{ \epsilon, \epsilon, \ldots, \epsilon\} = \epsilon.  
\end{align*}
Thus $\hat{G}^p$ is dense in $G^{*p*p}$ in the relative topology.  
\end{proof}

\section{Pro-$p$ completions}\label{SEC:completions}

Readers who are familiar with profinite groups might have noticed that the double dual $G^{*p*p}$ of a torsion-free abelian group $G$ shares important features with the pro-$p$ completion of $G$.  
This is not a coincidence---the two objects are, in fact, the same.  

\begin{defn}\label{DEF:pro-p}
A topological group $G$ is called a \defemph{pro-$p$ group} if it is isomorphic as a topological group to the inverse limit of an inverse system of finite $p$-groups.  
\end{defn}

The topology in Definition \ref{DEF:pro-p} may require some explanation.  
Any inverse limit of an inverse system of groups has a standard realization as a subgroup of the direct product:  
\[
\varprojlim (G_\lambda)_{\lambda\in\Lambda} = \{ (x_\lambda)_{\lambda\in\Lambda} \in \prod_{\lambda\in\Lambda} G_\lambda : \phi_{\mu\lambda}(x_\lambda) = x_\mu \text{ for all } \lambda\leq \mu\},  
\]
where, for $\lambda\leq \mu$, $\phi_{\mu\lambda} : G_\lambda\to G_\mu$ is the corresponding map in the inverse system.  
In the pro-$p$ case, the groups $G_\lambda$ in the direct product are finite $p$-groups, and hence are topological groups with the discrete topology.  
The topology on the inverse limit is the relative topology that it inherits from the direct product.  

Given an arbitrary group $G$, consider the set of all normal subgroups $N$ of $G$ for which $G/N$ is a finite $p$-group.  
This is a partially ordered set under inclusion, which means that there is an inverse system of natural homomorphisms $\phi_{\mu\lambda} : G/N_\lambda \to G/N_\mu$ between the quotients, where $N_\mu\leq N_\lambda$.  
The \defemph{pro-$p$ completion} $\mathcal{K}_p(G)$ of $G$ is the inverse limit of this inverse system.  
There is a natural embedding $\Psi^p : G\to \mathcal{K}_p(G)$, which sends $g\in G$ to $(gN_\lambda)_{\lambda\in\Lambda}$, and the image of $G$ under this embedding is a dense subgroup.  

$\mathcal{K}_p(G)$ is clearly a pro-$p$ group.  
Moreover it satisfies the following universal property with respect to $G$:  if there exists a pro-$p$ group $H$ and a group homomorphism $\theta : G\to H$, then there exists a continuous homomorphism $\eta : \mathcal{K}_p(G)\to H$ such that $\theta = \eta\circ\Psi^p$.  

If $G$ is a torsion-free abelian group, then $G^{*p*p}$ is a free $\Z_p$-module, and hence is a pro-$p$ group (and the pro-$p$ topology agrees with the relative topology on $G^{*p*p}$ \cite[Proposition 2.2.1, parts (a) and (b)]{RZ:profinite}).  
Thus the universal property guarantees the existence of a continuous homomorphism $\eta : \mathcal{K}_p(G)\to G^{*p*p}$ such that $\eta(\Psi^p(g)) = \hat{g}^p$ for all $g\in G$.  
Then in order to show that $G^{*p*p}$ is the pro-$p$ completion, it will be necessary to find a continuous homomorphism in the other direction.  
Let us do so by finding a compatible system of homomorphisms from $G^{*p*p}$ into all of the finite $p$-group quotients of $G$, and then use the universal property of the inverse limit to construct the required homomorphism.  

\begin{lemma}\label{LEM:quotient}
Let $p$ be a prime, let $G$ be a torsion-free abelian group, and let $H$ be a subgroup of $G$ of index $p^k$ for some $k\in\N$.  
Then there exists a unique continuous homomorphism $\theta_H : G^{*p*p}\to G/H$ such that $\theta_H(\hat{g}^p) = g+H$ for all $g\in G$.  
\end{lemma}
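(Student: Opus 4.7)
The plan is to build $\theta_H$ from the separating functionals supplied by Proposition \ref{PROP:separate-from-subgroup}, first defining it on the dense subgroup $\hat{G}^p$ and then extending by continuity through a finite cosets-distinguishing family.

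First I would verify that the prescription $\hat{g}^p \mapsto g+H$ is well-defined on $\hat{G}^p$. The kernel of $\Phi^p$ is $G_p$, so it suffices to show $G_p \subseteq H$. But if $g \in G_p$ then $g = p^k h$ for some $h \in G$, and since $G/H$ is abelian of order $p^k$ its exponent divides $p^k$, so $g + H = p^k(h+H) = 0$. Hence $\hat{g}^p \mapsto g+H$ is a well-defined surjective group homomorphism $\hat{G}^p \to G/H$.

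Next, for each non-zero coset $g + H \in G/H$, Proposition \ref{PROP:separate-from-subgroup} supplies $f \in G^{*p}$ with $|f(g)|_p > p^{-k} \geq |f(h)|_p$ for all $h \in H$, so $f(H) \subseteq p^k\Z_p$ while $f(g) \notin p^k\Z_p$. Thus $f$ descends to $\bar f : G/H \to \Z_p/p^k\Z_p$ with $\bar f(g+H) \neq 0$. Since $G/H$ is finite, choosing one such functional for each non-zero coset yields a finite family $F = \{f_1,\ldots,f_m\} \subseteq G^{*p}$ whose combined reduction $\bar F : G/H \to (\Z_p/p^k\Z_p)^m$ is injective. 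Define $\pi_F : G^{*p*p} \to (\Z_p/p^k\Z_p)^m$ by $\pi_F(\phi) = (\phi(f_i) \bmod p^k)_{i=1}^m$. This is a continuous group homomorphism: each evaluation $\phi \mapsto \phi(f_i)$ is continuous in the relative topology, and reduction mod $p^k$ is continuous to the discrete target. For $g \in G$ one has $\pi_F(\hat{g}^p) = \bar F(g+H)$, hence $\pi_F(\hat{G}^p) = \bar F(G/H)$, which is finite and thus closed in $(\Z_p/p^k\Z_p)^m$. By density of $\hat{G}^p$ (Theorem \ref{THM:density}) and continuity of $\pi_F$, we conclude $\pi_F(G^{*p*p}) \subseteq \bar F(G/H)$. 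Thus $\theta_H := \bar F^{-1} \circ \pi_F$ is a well-defined continuous group homomorphism $G^{*p*p} \to G/H$ extending $\hat{g}^p \mapsto g+H$.

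Uniqueness is immediate from density: any two continuous maps from $G^{*p*p}$ into the Hausdorff (discrete) space $G/H$ that agree on the dense subset $\hat{G}^p$ must coincide. The main obstacle is securing the finite separating family $F$, and this hinges entirely on the quantitative content of Proposition \ref{PROP:separate-from-subgroup}: the threshold $p^{-k}$ matches the exponent of $G/H$ exactly, so that reduction modulo $p^k$ turns separation of individual cosets by $p$-adic functionals into an embedding of $G/H$ into a finite discrete $p$-group, where the pro-$p$ topology and the relative topology mesh cleanly.
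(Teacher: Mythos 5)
Your proof is correct, and it rests on the same two pillars as the paper's: the separating functionals from Proposition \ref{PROP:separate-from-subgroup} (applied with $m=k$, which is legitimate since every element order in $G/H$ divides $|G/H|=p^k$) and the density of $\hat{G}^p$ from Theorem \ref{THM:density}. The difference is in the packaging. The paper defines $\theta_H(\phi)$ directly by choosing some $g$ with $\hat{g}^p$ in an $F$-determined neighbourhood of $\phi$, and then must verify by hand that the choice of $g$ does not matter, that the resulting map is a homomorphism, and that it is continuous --- three separate arguments, each invoking the strong triangle inequality. You instead factor the map as $\bar{F}^{-1}\circ\pi_F$ through the finite discrete group $(\Z_p/p^k\Z_p)^{m}$, where $\pi_F$ is evaluation at the finitely many separating functionals followed by reduction mod $p^k$; the injectivity of $\bar{F}$ on $G/H$ is exactly the separation property, and the containment $\pi_F(G^{*p*p})\subseteq\bar{F}(G/H)$ falls out of density plus the fact that finite subsets of a Hausdorff space are closed. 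This buys you the well-definedness, the homomorphism property, and the continuity of $\theta_H$ all at once, since each is inherited from $\pi_F$ and the finite isomorphism $\bar{F}^{-1}$. Your uniqueness argument (two continuous maps into a discrete space agreeing on a dense set coincide) is also slightly cleaner than the paper's, which runs the density argument on the kernel of the difference $\theta_H-\theta_H'$. The one preliminary step you include that the paper leaves implicit --- checking $G_p\subseteq H$ so that $\hat{g}^p\mapsto g+H$ is well-defined on $\hat{G}^p$ --- is correct and worth making explicit.
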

\begin{proof}
There exists $m\in\N\cup\{ 0\}$ such that the maximum order of any element $g+H\in G/H$ is $p^m$.  
By Lemma \ref{LEM:subgroup-separation}, if $g,g'\in G$ with $d_p(g,g')\leq p^{-m}$, then $g$ and $g'$ are in the same $H$-coset.  
Let $\{g_1,\ldots, g_{p^k}\}$ be a complete set of coset representatives of $G/H$, with $g_1\in H$, and let $F$ be a finite set of $p$-adic functionals such that, for each $2\leq i\leq p^k$, there exists $f\in F$ such that $|f(g_i)|_p > p^{-m} \geq |f(h)|_p$ for all $h\in H$.  
(Such a set $F$ exists by Proposition \ref{PROP:separate-from-subgroup}.)  

For $\phi\in G^{*p*p}$, consider the open neighbourhood $U_\phi$ of $\phi$ defined by $U_\phi = \prod_{f\in G^{*p}} U_f$, where $U_f = \{ \beta\in \Z_p : |\beta-\phi(f)|_p \leq p^{-m}\}$ if $f \in F$, and for $f\notin F$, $U_f = \Z_p$.  
Then $U_\phi$ is an open subset of $\Z_p^{G^{*p}}$, so its intersection $U_\phi'$ with $G^{*p*p}$ is open in the relative topology on $G^{*p*p}$.  
$\hat{G}^p$ is dense in $G^{*p*p}$ in the relative topology, so there exists some $g\in G$ such that $\hat{g}^p\in U_{\phi}'$.  
Set $\theta_H(\phi) = g+H$, and let us show that this is well-defined.  

Let us show that, if we had chosen a different element $g'\in G$ with $\hat{g}'^p\in U_\phi'$, then $g+H = g'+H$.  
To see this, note that $\hat{g}'^p\in U_\phi'$ implies that $|f(g-g')|_p\leq p^{-m}$ for all $f\in F$.  
Suppose that $g_j$ is the chosen representative of the coset $g-g' + H$; this means that $g_j - g + g'\in H$.  
If $j = 1$, then $g_1\in H$, so $g-g'\in H$ as required.  
Otherwise, pick $f\in F$ such that $|f(g_j)|_p > p^{-m} \geq |f(h)|_p$ for all $h\in H$, and note that, by the strong triangle inequality, 
\[
p^{-m} < |f(g_j)|_p \leq \max \{ |f(g-g')|_p,|f(g_j-g+g')|_p\}.  
\]
But $g_j-g+g'\in H$, so $|f(g_j-g+g')|_p \leq p^{-m}$, while $|f(g-g')|_p \leq \max\{ |f(g)-\phi(f)|_p,|f(g')-\phi(f)|_p\}$, which is less than or equal to $p^{-m}$ because $\hat{g}$ and $\hat{g}'$ are both in $U_\phi$.  
This is a contradiction, so $g-g'\in H$ after all, and $\theta_H$ is well defined.  

To see that $\theta_H$ is a group homomorphism, choose $\phi_1,\phi_2\in G^{*p*p}$ and observe that, if $g_1,g_2\in G$ satisfy $\hat{g}_1^p\in U_{\phi_1}$ and $\hat{g}_2^p\in U_{\phi_2}$, then $\widehat{g_1+g_2}^p\in U_{\phi_1+\phi_2}$ by the strong triangle inequality.  

To see that $\theta_H$ is continuous, pick $g+H\in G/H$ and consider an element in its pre-image set: $\phi\in \theta_H^{-1}(\{ g+H\})$.  
Then by the argument above, the open neighbourhood $U_\phi'\subseteq \theta_H^{-1}(g+H)$, so the pre-image of $g+H$ is open and $\theta_H$ is continuous.  

To see that $\theta_H$ is unique with these properties, suppose that $\theta_H': G^{*p*p}\to G/H$ is another continuous group homomorphism such that $\theta_H'(\hat{g}^p) = g+H$ for all $g\in G$.  
Then $\theta_H-\theta_H' : G^{*p*p}\to G/H$ is a continuous group homomorphism.  
To see that it is a group homomorphism is easy.  
To see that it is continuous, choose $g+H\in G/H$ and suppose $\phi\in (\theta_H-\theta_H')^{-1}(g+H)$.  
Say $\theta_H(\phi) = g_1+H$ and $\theta_H'(\phi) = g_2+H$; then $g_1+g_2 +H = g+H$.  
As $\theta_H$ and $\theta_H'$ are continuous, there exists open neighbourhoods $U_1,U_2$ of $\phi$ such that $U_1\subseteq \theta_H^{-1}(g_1+H)$ and $U_2\subseteq \theta_{H}'^{-1}(g_2+H)$; the open neighbourhood $U_1\cap U_2$ of $\phi$ then has the property that $U_1\cap U_2\subseteq (\theta_H-\theta_H')^{-1}(g+H)$.  
Thus $\theta_H-\theta_H'$ is continuous.  

The kernel of $\theta_H-\theta_H'$ is then a closed subset of $G^{*p*p}$, and, as it contains the dense subset $\hat{G}^p$, it must be everything.  
\end{proof}

\begin{thm}\label{THM:pro-p}
Let $p$ be a prime and let $G$ be a torsion-free abelian group.  
Then there is an isomorphism $\eta : \mathcal{K}_p(G) \to G^{*p*p}$ with the properties that $\eta(\Psi^p(g)) = \hat{g}^p$ for all $g\in G$ and $\eta$ is a homeomorphism.  
\end{thm}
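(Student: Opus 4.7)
The plan is to construct a continuous homomorphism $\theta : G^{*p*p} \to \mathcal{K}_p(G)$ inverse to $\eta$, using Lemma \ref{LEM:quotient} and the universal property of the inverse limit, and then use density of $\Psi^p(G)$ in $\mathcal{K}_p(G)$ and of $\hat{G}^p$ in $G^{*p*p}$ (Theorem \ref{THM:density}) to conclude that $\eta$ and $\theta$ are mutually inverse. The existence of $\eta$ was already explained in the paragraphs preceding the theorem: $\Phi^p : G \to G^{*p*p}$ is a homomorphism into a pro-$p$ group, so the universal property of the pro-$p$ completion yields a continuous homomorphism $\eta$ with $\eta\circ \Psi^p = \Phi^p$.

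For the inverse, recall that since $G$ is abelian, the pro-$p$ completion is $\mathcal{K}_p(G) = \varprojlim_H G/H$, taken over subgroups $H$ of $G$ of $p$-power index. For each such $H$, Lemma \ref{LEM:quotient} supplies a unique continuous homomorphism $\theta_H : G^{*p*p} \to G/H$ with $\theta_H(\hat{g}^p) = g+H$ for all $g\in G$. To assemble these into a map into the inverse limit, I would verify compatibility: if $H' \subseteq H$ are subgroups of $p$-power index and $\pi_{H'H}:G/H' \to G/H$ is the natural projection, then $\pi_{H'H}\circ \theta_{H'}$ and $\theta_H$ are both continuous homomorphisms $G^{*p*p}\to G/H$ sending $\hat{g}^p$ to $g+H$, so the uniqueness clause of Lemma \ref{LEM:quotient} forces $\pi_{H'H}\circ \theta_{H'} = \theta_H$. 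The universal property of the inverse limit then produces a continuous homomorphism $\theta : G^{*p*p} \to \mathcal{K}_p(G)$ whose $H$-component is $\theta_H$.

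Now I would check that $\eta$ and $\theta$ are two-sided inverses by a standard density-and-Hausdorff argument. For any $g \in G$, the $H$-component of $\theta(\hat{g}^p)$ is $\theta_H(\hat{g}^p) = g+H$, which is the $H$-component of $\Psi^p(g)$; hence $\theta\circ\eta$ and $\mathrm{id}_{\mathcal{K}_p(G)}$ agree on $\Psi^p(G)$. Since $\Psi^p(G)$ is dense in $\mathcal{K}_p(G)$, since $\mathcal{K}_p(G)$ is Hausdorff (it is a subspace of a product of discrete groups), and since $\theta\circ\eta$ is continuous, the two maps coincide. Similarly, $\eta\circ\theta$ and $\mathrm{id}_{G^{*p*p}}$ are both continuous maps $G^{*p*p} \to G^{*p*p} \subseteq \Z_p^{G^{*p}}$ (the latter being Hausdorff), and they agree on $\hat{G}^p$, which is dense in $G^{*p*p}$ in the relative topology by Theorem \ref{THM:density}; hence they coincide.

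It follows that $\eta$ is a continuous bijective group homomorphism whose inverse $\theta$ is also continuous, so $\eta$ is simultaneously a group isomorphism and a homeomorphism, as required. The step I expect to be the main hurdle has already been absorbed into Lemma \ref{LEM:quotient}, namely producing the maps $\theta_H$ out of the abstract double dual; once those are in hand, the verification of compatibility and the two density arguments are essentially formal.
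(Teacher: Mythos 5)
Your proposal is correct and follows essentially the same route as the paper: existence of $\eta$ from the universal property of the pro-$p$ completion, construction of the inverse from the maps $\theta_H$ of Lemma \ref{LEM:quotient} via the universal property of the inverse limit, and density arguments to show the composites are the identity. Your use of the uniqueness clause of Lemma \ref{LEM:quotient} to verify compatibility of the $\theta_H$ is a nice explicit justification of a step the paper only asserts.
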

\begin{proof}
The existence of a continuous group homomorphism $\eta$ such that $\eta(\Psi^p(g)) = \hat{g}^p$ is guaranteed by the universal property of the pro-$p$ completion.  
To see that $\eta$ is an isomorphism, let us construct an inverse homomorphism.  

Consider the set of subgroups $H$ of $G$ such that $[G:H] = p^k$ for some $k\in\N\cup\{ 0\}$.  
By Lemma \ref{LEM:quotient}, for each of these subgroups, there exists a continuous homomorphism $\theta_H:G^{*p*p}\to G/H$; moreover these homomorphisms are easily seen to be compatible with the natural homomorphisms between the finite $p$-group quotients of $G$.  
Therefore by the universal property of the inverse limit, there is a continuous group homomorphism $\rho : G^{*p*p}\to \mathcal{K}_p(G)$.  
Moreover, for any $g\in G$, the element $\hat{g}^p$ has image $g+H$ under $\theta_H$, which implies that $\rho(\hat{g}^p) = \Psi^p(g)$.  

Let us show that $\rho$ is an inverse for $\eta$.  
Note that $\rho\circ \eta$ is continuous, and, for $g\in G$, it is true that $\rho(\eta(\Psi^p(g))) = \rho(\hat{g}^p) = \Psi^p(g)$.  
Thus $\rho\circ\eta$ is the identity on the dense subgroup $\{ \Psi^p(g) : g\in G\}$, so $\rho\circ\eta - id_{\mathcal{K}_p(G)}$ is a continuous group homomorphism that contains a dense subgroup in its kernel, and hence is trivial.  

A similar argument shows that $\eta\circ\rho = id_{G^{*p*p}}$.  
\end{proof}

%
%

\section{Classification}\label{SEC:classification}

The theory of $p$-adic functionals can be used to classify finite-rank torsion-free abelian groups.  
Specifically, the collection of homomorphisms $\Phi^p : G\to G^{*p*p}$ for a torsion-free abelian group $G$, as $p$ ranges over the set of all primes, gives enough information to recover the isomorphism class of $G$.  

Let $P$ denote the set consisting of all integer primes and let $G$ be a torsion-free abelian group.  
Pick a maximal independent subset $\mathcal{B}$ of $G$.  

For an element $(\alpha_\lambda)_{\lambda\in\Lambda} \in \Z_p^{\Lambda}$, define the sup-norm $\| (\alpha_\lambda)_{\lambda\in\Lambda} \|_p = \sup_{\lambda\in\Lambda} \{ |\alpha_\lambda|_p\}$.  
It is easy to verify that $\|\cdot \|_p$ is indeed a norm, and that it satisfies the strong triangle inequality.  

$G^{*p*p}$ is isomorphic to $\Z_p^{\Lambda_p}$ for some index set $\Lambda_p$; if we identify $G^{*p*p}$ with this latter group, then the homomorphism $\Phi^p$ sends elements of $G$ to $\Lambda_p$-tuples in $\Z_p^{\Lambda_p}$.  
Moreover, by Theorem \ref{THM:density}, $\Phi^p$ is a pseudo-isometry, meaning that $d_p(g,0) = \| \Phi^p(g)\|_p$.  
This is a powerful fact: it implies that $G$ is entirely determined by the values $(\Phi^p(g))_{p\in P}$ as $g$ ranges over the elements of $\mathcal{B}$.  
Indeed, $G$ is isomorphic to a subgroup of $\langle\mathcal{B}\rangle\otimes \Q$, so in order to specify $G$ it is enough to be able to say, for each element $g = a_1g_1+\cdots +a_kg_k\in \langle\mathcal{B}\rangle\subseteq G$ and $m\in\N$, whether or not $g$ is in $mG$.  
But $g\in mG$ if and only if $|m|_p \geq d_p(g,0) = \| \Phi^p(g)\|_p = \| a_1\Phi^p(g_1) + \cdots + a_k\Phi^p(g_k)\|_p$ for all $p\in P$.  
This is summarized in the following proposition.  

\begin{prop}\label{PROP:classification-1}
Let $G$ be a torsion-free abelian group, let $P$ denote the set of integer primes, and, for each $p\in P$, identify $G^{*p*p}$ with the group $\Z_p^{\Lambda_p}$ for some index set $\Lambda_p$.  
If $\mathcal{B}$ is any maximal independent subset of $G$, then the values $(\Phi^p(g))_{p\in P, g\in\mathcal{B}}$ completely determine $G$.  
\end{prop}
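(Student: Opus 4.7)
The plan is to reconstruct $G$ from the given data as a specific subgroup of the divisible hull $V := \langle\mathcal{B}\rangle\otimes_\Z \Q$. Because $\mathcal{B}$ is a maximal independent subset of the torsion-free group $G$, the natural map $G \hookrightarrow V$ is injective and its image contains $\langle\mathcal{B}\rangle$; thus $G$ sits between $\langle\mathcal{B}\rangle$ and $V$, and to recover $G$ from the data it suffices, given any $v \in V$, to decide whether $v \in G$ using only the data.

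Given such a $v$, I would first write $v = \tfrac{1}{m}(a_1 g_1 + \cdots + a_k g_k)$ for some $m \in \N$, $a_i \in \Z$, and $g_i \in \mathcal{B}$, and set $g := a_1 g_1 + \cdots + a_k g_k \in \langle\mathcal{B}\rangle$. Then $v \in G$ if and only if $g \in mG$. Next, I would invoke an elementary Bezout argument (valid in any abelian group) to the effect that $g \in mG$ iff $g \in p^{v_p(m)} G$ for every prime $p$ dividing $m$; and the latter is, by the very definition of the $p$-adic pseudometric, equivalent to $d_p(g,0) \leq |m|_p$. So $v \in G$ iff $d_p(g,0) \leq |m|_p$ for every $p \in P$.

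Finally, I would apply Theorem \ref{THM:density} to rewrite $d_p(g,0) = \|\Phi^p(g)\|_p$, and use $\Z$-linearity of $\Phi^p$ to expand $\Phi^p(g) = a_1\Phi^p(g_1) + \cdots + a_k\Phi^p(g_k)$, a value computable from the data $(\Phi^p(h))_{h\in\mathcal{B}}$. The membership test $v \in G$ then becomes $\|a_1\Phi^p(g_1) + \cdots + a_k\Phi^p(g_k)\|_p \leq |m|_p$ for every $p \in P$, decidable entirely from the data. I do not foresee a real obstacle: Theorem \ref{THM:density} does the heavy lifting, encoding the fact that $\Phi^p$ is a pseudo-isometry, and the remaining steps are the standard reduction of integer divisibility to prime-power divisibility. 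The only care needed is in phrasing what it means for the data to \emph{determine} $G$: they pin down both the divisible hull $V$ (canonically built from $\mathcal{B}$) and, via the norm inequalities just described, the precise subset of $V$ that is $G$.
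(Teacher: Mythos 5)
Your proposal is correct and follows essentially the same route as the paper: the paper's argument (given in the discussion immediately preceding the proposition) also realizes $G$ as a subgroup of $\langle\mathcal{B}\rangle\otimes\Q$, reduces membership to the divisibility test $g\in mG$, and converts that to the inequality $|m|_p\geq d_p(g,0)=\|a_1\Phi^p(g_1)+\cdots+a_k\Phi^p(g_k)\|_p$ for all $p$ via Theorem \ref{THM:density}. Your write-up merely makes explicit two small steps the paper leaves implicit (the Bezout reduction of $m$-divisibility to prime-power divisibility, and the use of torsion-freeness to pass from $g\in mG$ back to $v\in G$), which is a welcome addition rather than a deviation.
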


In the case that $G$ has finite rank, the data that specify $G$ can be stated more explicitly in terms of the following definition.  
\begin{defn}\label{DEF:group-determined}
Let $n\in \N$, let $P$ be the set of all primes and let $(A_p)_{p\in P}$ be a sequence of matrices with $A_p\in M_{n_p\times n}(\Z_p)$.  
Then the \defemph{group determined by $(A_p)_{p\in P}$} is the torsion-free abelian group 
\[
\mathcal{G}((A_p)_{p\in P}) := \{ v\in \Q^n : A_pv\in \Z_p^{n_p} \text{ for all } p\in P\}.
\]
\end{defn}

\begin{prop}\label{PROP:classification-2}
Let $P$ denote the set of integer primes and let $G$ be a torsion-free abelian group with finite rank, $n$.  
Choose
\begin{enumerate}
\item  a maximal independent set $\{ g_1,\ldots,g_n\}\subset G$; and
\item  for each $p\in P$, a $\Z_p$-basis $\{ f_1^{(p)},\ldots, f_{n_p}^{(p)}\}$ for $G^{*p}$.  
\end{enumerate}
For each $p\in P$, let $A_p = (f_i^{(p)}(g_j)) \in M_{n_p\times n}(\Z_p)$.  
Then $G$ is isomorphic to $\mathcal{G}((A_p)_{p\in P})$.  
\end{prop}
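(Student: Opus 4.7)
The plan is to exhibit an explicit isomorphism $\psi:G\to\mathcal{G}((A_p)_{p\in P})$. Since $\{g_1,\ldots,g_n\}$ is a maximal independent subset of $G$, every $g\in G$ has a unique expression $g=\sum_{j=1}^na_jg_j$ with $a_j\in\Q$; set $\psi(g):=(a_1,\ldots,a_n)^t\in\Q^n$, which is an injective homomorphism. A preliminary observation, used throughout, is that each $f\in G^{*p}$ extends uniquely to a $\Q$-linear map $G\otimes\Q\to\Q_p$: if $N\in\N$ clears the denominators of the $a_j$, then $\Z$-linearity on $G$ gives $f(Ng)=\sum_j(Na_j)f(g_j)$, so $f(g):=\sum_ja_jf(g_j)\in\Q_p$ is well-defined, and agrees with the original value when $g\in G$.

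The forward inclusion $\psi(G)\subseteq\mathcal{G}((A_p)_{p\in P})$ is then immediate: for $g\in G$ and any chosen basis functional $f_i^{(p)}$, the $i$th coordinate of $A_p\psi(g)$ equals $\sum_ja_jf_i^{(p)}(g_j)=f_i^{(p)}(g)\in\Z_p$. For the reverse inclusion, suppose $v=(a_1,\ldots,a_n)^t\in\Q^n$ satisfies $A_pv\in\Z_p^{n_p}$ for every $p\in P$. Set $h=\sum_ja_jg_j\in G\otimes\Q$ and choose $N\in\N$ so that $Nh\in G$; the aim is to show $Nh$ is divisible by $N$ in $G$, which yields $h\in G$ and $\psi(h)=v$. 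Divisibility by $N$ in the torsion-free group $G$ reduces, via a Bezout argument using the fact that the integers $N/p^{v_p(N)}$ (for $p\mid N$) have overall $\gcd$ equal to $1$, to showing that $Nh$ is divisible by $p^{v_p(N)}$ in $G$ for each $p\mid N$; by the definition of $d_p$ this is the statement $d_p(Nh,0)\leq|N|_p$.

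The crucial step is then to invoke Theorem~\ref{THM:density} to rewrite $d_p(Nh,0)=\max_{f\in G^{*p}}|f(Nh)|_p$. Given $f=\sum_ic_if_i^{(p)}\in G^{*p}$ with $c_i\in\Z_p$, the extension argument above gives $f(Nh)=N\sum_ic_i(A_pv)_i$, and since $(A_pv)_i\in\Z_p$ by hypothesis one obtains $|f(Nh)|_p\leq|N|_p$, as required. The main technical care I expect to need is in this bookkeeping step, where $\Q$-denominators and $\Z_p$-coefficients must be juggled consistently so as to identify $f(Nh)$ with the appropriate combination of entries of $A_pv$; once that identification is made, Theorem~\ref{THM:density} converts the metric condition on $G$ into exactly the linear-algebraic condition defining $\mathcal{G}((A_p)_{p\in P})$, completing the argument.
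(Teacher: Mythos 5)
Your argument is correct and follows essentially the same route the paper takes: the paper leaves Proposition \ref{PROP:classification-2} without a formal proof, deriving it from the discussion preceding Proposition \ref{PROP:classification-1}, where membership $g\in mG$ is converted via Theorem \ref{THM:density} into the condition $|m|_p\geq d_p(g,0)=\max_{f\in G^{*p}}|f(g)|_p$ for all $p$. Your writeup supplies exactly the details that discussion leaves implicit (the extension of functionals to $G\otimes\Q$, the reduction of $N$-divisibility to $p^{v_p(N)}$-divisibility, and the identification of $f(Nh)$ with a $\Z_p$-combination of the entries of $A_pv$), all of which check out.
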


Thus the finite-rank torsion-free abelian group $G$ is completely described by a sequence, indexed by the prime numbers, of matrices $A_p$ with entries in $\Z_p$.  
The rows of the $p$th matrix correspond to elements of a basis for $G^{*p}$.  
This fact places a restriction on the structure of $A_p$; this restriction is expressed in terms of the following definition.  

\begin{defn}\label{DEF:column-span}
Let $p$ be a prime, let $m,n\in\N$, and let $A\in M_{m\times n}(\Z_p)$.  
Then define the \defemph{column span} of $A$ to be 
\[
C(A) := \spa_\Z \{ v/\| v\|_p : v \text{ is a non-zero column of } A\},
\]
where, for a vector $v\in\Z_p^{n}$, $\| v\|_p$ is the maximum of the $p$-adic absolute value of its entries.  
\end{defn}
Then, by Theorem \ref{THM:density}, the matrices $A_p$ in Proposition \ref{PROP:classification-2} have the property that the column span $C(A_p)$ of $A_p$ is dense in $\Z_p^{n_p}$.  
This implies in particular that $n_p\leq n$ for all primes $p$.  

\begin{example}\label{EX:favourite-matrix-sequence}
Let $G$ be the group from Example \ref{EX:favourite}.  
Then for $p\neq 3$, by choosing the most obvious basis for $G^{*p}$, we can arrange for the matrix $A_p$ to be $\twotwo{1}{0}{0}{1}$.  
The $3$-adic functional described in that example gives rise to the matrix $A_3 = (1-\lambda,1)$, where $\lambda$ is the root of $x^2-5x+3=0$ for which $|\lambda|_3<1$.  
\end{example}

\begin{example}\label{EX:two-three-five}
Let $G$ be the subgroup of $\Q^2$ generated by $\Z[1/2]\oplus \Z[1/3]$ and $\Z[1/5]{{1}\choose {1}}$.  
Then the obvious choices of bases for $G^{*2}$, $G^{*3}$, and $G^{*5}$ yield matrices $A_2 = (0,1)$, $A_3 = (1,0)$, and $A_5 = (1,-1)$.  
For $p > 5$, by choosing the most obvious basis for $G^{*p}$, we can arrange for the matrix $A_p$ to be $\twotwo{1}{0}{0}{1}$.  
\end{example}

\begin{defn}\label{DEF:factored}
Let $P$ denote the set of primes and let $n\in\N$.  
Suppose that $(m_p)_{p\in P}$ is a sequence of non-negative integers and $(A_p)_{p\in P}$ is a sequence of matrices satisfying  
\begin{enumerate}
\item  $A_p\in M_{m_p\times n}(\Z_p)$;
\item  either $m_p = 1$ and $A_p$ is the zero row in $M_{1\times n}(\Z_p)$, or else $C(A_p)$ is dense in $\Z_p^{m_p}$ (see Definition \ref{DEF:column-span}); and
\end{enumerate}
Then let us refer to the matrices $(A_p)_{p\in P}$ as a \defemph{factored form}.  
If, for some rank-$n$ torsion-free abelian group $G$, it is true that $G\cong \mathcal{G}((A_p)_{p\in P})$, then let us refer to the matrices $(A_p)_{p\in P}$ as a \defemph{factored form of $G$}.  
\end{defn}

\begin{remark}
The matrices in the factored form of $G$ are related to Malcev's matrices \cite{M:matrices} (Malcev's matrices were not originally expressed as matrices with entries in $\Z_p$, but in \cite{K:summary} Kostromina demonstrates that they can be equivalently described that way).  
Malcev's matrices $(M_p)_{p\in P}$ are square matrices, and they satisfy the condition that the first $n-n_p$ columns of $A_pM_p^t$ are zero, where $A_p$ comes from a factored form derived from an appropriate maximal independent subset of $G$.  

The factored form is also related to the matrices of Kurosch \cite{K:matrices}.  
Following the description in \cite[Chapter XIII, Section 93]{F:book}, the matrices of Kurosch come from the tensor product $G\otimes \Z_p$.  
This decomposes (not necessarily uniquely) as a direct sum of a reduced part and its non-reduced (i.e. divisible) subgroup.  
The divisible part is a $\Q_p$-vector space and the reduced part is a free $\Z_p$-module.  
Let $\mathcal{B}$ be a maximal independent subset of $G$.  
The coefficients in the $i$th row of the $p$th Kurosch matrix express the $i$th element of $\mathcal{B}$ as a combination of basis elements for the divisible and reduced parts of $G\otimes \Z_p$.  

One main result of the present work is that the information from the divisible subgroup of $G\otimes \Z_p$ is extraneous; that is, the last $n_p$ columns of the Kurosch matrices are enough to determine $G$ completely.  
\end{remark}

It is possible to characterize the homomorphisms between factored forms.  
\begin{prop}\label{PROP:factored-homomorphisms}
Let $(A_p)_{p\in P}$ and $(B_p)_{p\in P}$ be two factored forms, with each matrix $A_p\in M_{m_p\times m}(\Z_p)$ and each matrix $B_p\in M_{n_p\times n}(\Z_p)$.  
Then a matrix $V\in M_{n\times m}(\Q)$ represents a group homomorphism from $\mathcal{G}((A_p)_{p\in P})$ to $\mathcal{G}((B_p)_{p\in P})$ if and only if, for all $p\in P$, every row of $B_pV$ lies in the $\Z_p$-module spanned by the rows of $A_p$.  
\end{prop}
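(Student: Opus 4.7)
The plan is to handle the two directions separately. The ``if'' direction is a direct calculation: suppose every row of $B_pV$ lies in the $\Z_p$-span of the rows of $A_p$ for each prime $p$. Then for any $v \in \mathcal{G}((A_p)_{p\in P})$ and any prime $p$, each entry of $B_pVv$ is a $\Z_p$-combination of the entries of $A_pv \in \Z_p^{m_p}$, hence lies in $\Z_p$. Therefore $Vv \in \mathcal{G}((B_p)_{p\in P})$ and $V$ represents a homomorphism.

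For the ``only if'' direction, write $G = \mathcal{G}((A_p)_{p\in P})$, fix a prime $p$, and let $r$ be a row of $B_pV$. Since $V$ is a homomorphism, for any $v \in G$ the corresponding entry of $B_pVv$ lies in $\Z_p$, so $v \mapsto r\cdot v$ defines a $p$-adic functional on $G$. The standard basis vectors $e_1,\ldots, e_m$ lie in $G$, because $A_q e_j$ is the $j$th column of $A_q$, which is in $\Z_q^{m_q}$ by hypothesis, for every prime $q$. Evaluating $r$ on each $e_j$ shows in particular that $r \in (\Z_p^m)^*$.

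The key step is to show that the rows $r_1,\ldots, r_{m_p}$ of $A_p$ generate $G^{*p}$ as a $\Z_p$-module. First I would verify that $d_p(g,0) = \| A_p g\|_p$ for every $g \in G$. If $g \in p^kG$, then $A_pg \in p^k\Z_p^{m_p}$, which gives $\| A_p g\|_p \leq d_p(g,0)$. Conversely, if $\|A_pg\|_p = p^{-k}$, then $g/p^k$ still lies in $G$: the condition at $p$ holds by choice of $k$, and for $q \neq p$ the element $1/p^k$ is a $q$-adic unit, so $A_q(g/p^k) \in \Z_q^{m_q}$; therefore $g \in p^kG$ and $d_p(g,0) \leq \|A_pg\|_p$. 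Combining the two shows $d_p(g,0) = \|A_pg\|_p = \max_i |r_i\cdot g|_p$, and this maximum is attained by some $r_i$. By Proposition \ref{PROP:spanning-states}, the $\Z_p$-span of $\{ r_1,\ldots, r_{m_p}\}$ is dense in $G^{*p}$ in the relative topology. But this span is the continuous image of the compact space $\Z_p^{m_p}$ inside the Hausdorff space $\Z_p^G$, hence it is closed, and therefore it equals all of $G^{*p}$. (The corner case $m_p=1$ with $A_p = 0$ is trivial: then $G$ is $p$-divisible, so $G^{*p}=0$.)

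Applying this to the functional $v \mapsto r\cdot v$ produces $\alpha_1,\ldots,\alpha_{m_p} \in \Z_p$ with $r \cdot v = \sum_i \alpha_i (r_i \cdot v)$ for all $v \in G$. Evaluating on $e_1,\ldots,e_m \in G$ gives the desired row equality $r = \sum_i \alpha_i r_i$, as required. I expect the main delicate point to be the compactness/closedness argument that upgrades density of the $\Z_p$-span to honest equality inside $G^{*p}$, since everything else reduces to the Hahn--Banach--type machinery already developed in Section \ref{SEC:states}.
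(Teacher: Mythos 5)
Your proof is correct, and both directions follow the same overall strategy as the paper's. The difference is in how much is actually proved: the paper's ``only if'' direction is a single sentence that ends with ``\ldots and hence must correspond to a $\Z_p$ combination of the rows of $A_p$,'' leaving unjustified the key fact that every $p$-adic functional on $\mathcal{G}((A_p)_{p\in P})$ is a $\Z_p$-combination of the rows of $A_p$ (this is clear when the factored form is built from a basis of $G^{*p}$ as in Proposition \ref{PROP:classification-2}, but for an abstract factored form it requires an argument). You supply that argument in full: the identity $d_p(g,0)=\|A_pg\|_p$, the appeal to Proposition \ref{PROP:spanning-states} to get density of the row span in $G^{*p}$, and the compactness of $\Z_p^{m_p}$ to upgrade density to equality. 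All three steps check out (including the observation that $e_1,\ldots,e_m\in\mathcal{G}((A_p)_{p\in P})$, which is what lets you pass from equality of functionals back to equality of rows), so your version is strictly more complete than the paper's. For the ``if'' direction you prove only well-definedness ($Vv\in\mathcal{G}((B_p)_{p\in P})$) where the paper proves preservation of $p^k$-divisibility for all $k$; since left multiplication by a matrix is automatically additive, your weaker statement suffices.
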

\begin{proof}
The ``only if'' part of the statement is clear, because left multiplication by any row $R$ of $B_p$ is a $p$-adic functional on $\mathcal{G}((B_p)_{p\in P})$, so the composition $g \mapsto Vg \mapsto RVg$ is a $p$-adic functional on $\mathcal{G}((A_p)_{p\in P})$, and hence must correspond to a $\Z_p$ combination of the rows of $A_p$.  

For the ``if''' part of the statement, suppose that, for all primes $p$, every row of $B_pV$ lies in the $\Z_p$-module spanned by the rows of $A_p$.  
Pick $g\in \mathcal{G}((A_p)_{p\in P})$.  
To show that left multiplication by $V$ is a homomorphism, it is enough to show that, if $g\in p^k\mathcal{G}((A_p)_{p\in P})$ for some prime $p$ and $k\geq 0$, then $Vg \in p^k\mathcal{G}((B_p)_{p\in P})$.  

So suppose $g\in p^k\mathcal{G}((A_p)_{p\in P})$.  
By Definition \ref{DEF:group-determined}, this means that $A_pg\in p^k\Z_p^{m_p}$.  
Thus if $R'$ is a row of $A_p$, then $R'g \in p^k\Z_p$.  
As any row $R$ of $B_pV$ is in the $\Z_p$-span of the rows of $A_p$, this means that $Rg\in p^k\Z_p$ as well, so $B_pVg\in p^k\Z_p^{n_p}$, and by Definition \ref{DEF:group-determined} $Vg\in  p^k\mathcal{G}((B_p)_{p\in P})$.  
\end{proof}

We are now in a position to state Theorem \ref{THM:classification-3}, the main classification result.  
It says that every finite-rank torsion-free abelian group has a factored form, and gives a characterization of isomorphisms between factored forms.  
The first statement is a consequence of Proposition \ref{PROP:classification-2} and the discussion following it, while the second statement is a consequence of Proposition \ref{PROP:factored-homomorphisms}.  

\begin{thm}\label{THM:classification-3}
Any finite-rank torsion-free abelian group $G$ has a factored form.  

Moreover, if $(m_p,A_p)_{p\in P}$ and $(n_p,B_p)_{p\in P}$ are two factored forms, then $\mathcal{G}((A_p)_{p\in P})$ is isomorphic to $\mathcal{G}((B_p)_{p\in P})$ if and only if the following conditions hold.  
\begin{enumerate}
\item  $m_p = n_p$ for all $p\in P$;
\item  there exists $V\in GL(n,\Q)$ such that, for all $p\in P$, $B_p = U_pAV$ for some $U_p\in GL(n_p,\Z_p)$.  
\end{enumerate}
\end{thm}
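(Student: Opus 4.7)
The proof splits into existence (first sentence) and characterization (part 2). I would handle them as follows.

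For existence, the plan is to produce a factored form directly via Proposition \ref{PROP:classification-2}. Choose a maximal independent subset $\{g_1, \ldots, g_n\}$ of $G$ whose reductions in $G/pG$ span $G/pG$ for every prime $p$ (in the finite-rank setting this can be arranged by passing to suitably primitive representatives). For each $p$, choose a $\Z_p$-basis $\{f_1^{(p)}, \ldots, f_{m_p}^{(p)}\}$ of the free $\Z_p$-module $G^{*p}$ and set $A_p = (f_i^{(p)}(g_j))$; the degenerate case $G^{*p} = 0$ is handled by the zero-row convention. Proposition \ref{PROP:classification-2} gives $G \cong \mathcal{G}((A_p)_{p \in P})$, and the only nontrivial condition of Definition \ref{DEF:factored} to check is density of $C(A_p)$ in $\Z_p^{m_p}$. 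The normalized columns of $A_p$ are $\Phi^p(g_j)/\|\Phi^p(g_j)\|_p$ expressed in the basis dual to $\{f_i^{(p)}\}$; their $\Q_p$-span is all of $\Q_p^{m_p}$ because $\hat{G}^p$ is dense in $\Z_p^{m_p}$ by Theorem \ref{THM:density}, and the condition on the $g_j$ forces the $\Z$-span of the normalized columns to surject onto $\Z_p^{m_p}/p\Z_p^{m_p}$, which is equivalent to density.

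For the characterization, both implications rest on Proposition \ref{PROP:factored-homomorphisms}. For the ``if'' direction, suppose $B_p = U_p A_p V$ with $U_p \in GL(n_p, \Z_p)$ and $V \in GL(n, \Q)$. Rearrangement gives $B_p V^{-1} = U_p A_p$, so every row of $B_p V^{-1}$ is a $\Z_p$-combination of rows of $A_p$; by Proposition \ref{PROP:factored-homomorphisms} this makes multiplication by $V^{-1}$ a group homomorphism $\mathcal{G}((A_p)) \to \mathcal{G}((B_p))$. Symmetrically, $A_p V = U_p^{-1} B_p$ exhibits $V$ as a homomorphism in the other direction, and together they give an isomorphism.

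For the ``only if'' direction, suppose $V \in GL(n, \Q)$ represents an isomorphism $\mathcal{G}((B_p)) \to \mathcal{G}((A_p))$. Proposition \ref{PROP:factored-homomorphisms} applied to both $V$ and $V^{-1}$ yields $\Z_p$-matrices $U_p' \in M_{n_p \times m_p}(\Z_p)$ and $U_p'' \in M_{m_p \times n_p}(\Z_p)$ satisfying $B_p V^{-1} = U_p' A_p$ and $A_p V = U_p'' B_p$. Composing these two relations gives $(I_{m_p} - U_p'' U_p') A_p = 0$ and $(I_{n_p} - U_p' U_p'') B_p = 0$. The crucial step is cancellation of $A_p$ and $B_p$; this requires that their rows be $\Z_p$-linearly independent, which is exactly what density of the column span delivers, since density in $\Z_p^{m_p}$ forces the columns of $A_p$ to $\Q_p$-span $\Q_p^{m_p}$, so $A_p$ has full $\Q_p$-row rank $m_p$ and its rows are $\Z_p$-independent. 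Cancellation then yields $U_p'' U_p' = I_{m_p}$ and $U_p' U_p'' = I_{n_p}$, which forces $m_p = n_p$, places $U_p := U_p' \in GL(n_p, \Z_p)$, and recovers $B_p = U_p A_p V$ from $B_p V^{-1} = U_p A_p$. The main obstacle is this cancellation step: row-independence of $A_p$ (obtained from density of $C(A_p)$) is precisely the structural reason that density is built into Definition \ref{DEF:factored}, and it is what makes the row count $m_p$ an isomorphism invariant and $U_p$ determined up to $GL(n_p, \Z_p)$.
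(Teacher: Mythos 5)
Your treatment of the characterization half (the ``moreover'') is correct and follows the paper's intended route through Proposition \ref{PROP:factored-homomorphisms}; in particular the cancellation step is justified exactly as you say, since density of $C(A_p)$ forces the columns of $A_p$ to span $\Q_p^{m_p}$ over $\Q_p$, hence the rows of $A_p$ to be $\Z_p$-independent, and then $U_p''U_p'=I_{m_p}$ and $U_p'U_p''=I_{n_p}$ give $m_p=n_p$ and $U_p'\in GL(n_p,\Z_p)$. (You should dispose separately of the degenerate case where $A_p$ is the zero row: there $\mathcal{G}((A_p)_{p\in P})$ is $p$-divisible, so $B_p$ must be the zero row as well and the identity $B_p=U_pA_pV$ is vacuous.)

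The existence half has a genuine gap, located precisely at your parenthetical claim that $\mathcal{B}$ ``can be arranged'' so that its reductions span $G/pG$ for every prime simultaneously. You are right that some such hypothesis is needed --- the paper itself asserts density of $C(A_p)$ as a consequence of Theorem \ref{THM:density} for an arbitrary $\mathcal{B}$, which is not justified, since Theorem \ref{THM:density} gives density of $\Phi^p(G)$ for the whole group and hence only that the columns of $A_p$ span $\Q_p^{n_p}$ over $\Q_p$ --- but the required choice of $\mathcal{B}$ need not exist. For instance, take $G\subset\Q^2$ generated by $e_1,e_2$ and $u_p:=\frac{1}{p}(e_1+a_pe_2)$ for every prime $p$, with $a_p\in\{1,\dots,p-1\}$. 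Then $G/pG\cong\mathbb{F}_p^2$ with basis $\bar{u}_p,\bar{e}_2$, and a primitive $g=(x,y)\in\Z^2$ satisfies $g=pxu_p+(y-a_px)e_2$, so its normalized image lies in the line $\mathbb{F}_p\bar{e}_2$ unless $a_p\equiv y/x\pmod{p}$; the same holds, for all but finitely many $p$, for an arbitrary $g\in G$ with $y/x$ replaced by a single rational slope attached to $g$. Choosing the $a_p$ diagonally so that every pair of rational slopes is avoided modulo infinitely many primes yields a rank-two group for which no maximal independent subset produces matrices with dense column span at every prime. So either the existence statement needs a different argument, or condition (2) of Definition \ref{DEF:factored} should be weakened to ``the columns of $A_p$ span $\Q_p^{m_p}$ over $\Q_p$'' --- which is all that your characterization argument (and the paper's) actually uses, and which does follow from Theorem \ref{THM:density} for every choice of $\mathcal{B}$.
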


The matrix $V$ in Theorem \ref{THM:classification-3} should be thought of as a change of basis matrix that represents a new maximal independent subset in terms of the old one.  
The matrices $U_p$ should be thought of as change of basis matrices that represent a new basis of the $p$-adic dual group in terms of the old basis.  

\begin{remark}\label{REM:isomorphism-check}
In order to check that two factored forms represent isomorphic groups, it is necessary to check infinitely many matrix equations.  
Specifically, condition (2) in the definition of a factored form (Definition \ref{DEF:factored}) implies that, if $B_p$ is not the zero row, then $B_pB_p^t$ is invertible over $\Q_p$.  
Thus the equation in Theorem \ref{THM:classification-3} can be rearranged to obtain 
\begin{align}\label{EQ:system}
A_pVB_p^t(B_pB_p^t)^{-1} & = U_p^{-1}.
\end{align}

So two factored forms represent isomorphic groups if and only if there exists a matrix $V\in GL(n,\Q)$ such that, for each prime $p$, the matrix on the left hand side of Equation \ref{EQ:system} is an element of $GL(n_p,\Z_p)$.  
\end{remark}

\bibliographystyle{abbrv}

\end{document}